\newcommand{\R}{{\mathbb R}}
\renewcommand{\S}{{\mathbb S}}
\newcommand{\be}[1]{\begin{equation}\label{#1}}
\newcommand{\ee}{\end{equation}}
\renewcommand{\(}{\left(}
\renewcommand{\)}{\right)}
\renewcommand{\S}{\mathbb{S}}
\newcommand{\iS}[1]{\int_{\S^d}{#1}\;d\mu}
\newcommand{\iStwo}[1]{\int_{\S^2}{#1}\;d\mu}
\newcommand{\nrm}[2]{\|{#1}\|_{\mathrm L^{#2}(\S^d)}}
\newcommand{\Lap}{\Delta_{\S^d}}
\renewcommand{\L}{{\mathcal L}\,}
\newcommand{\ix}[1]{\int_{-1}^1{#1}\;d\nu_d}
\newcommand{\nrmx}[2]{\|#1\|_{#2}}
\newcommand{\scal}[2]{\left\langle{#1},{#2}\right\rangle}
\title[Interpolation inequalities on the sphere]{Interpolation inequalities on the sphere:\\ linear \emph{vs.} nonlinear flows}
\author[J.~Dolbeault]{\firstname{Jean} \lastname{Dolbeault}}
\address{Ceremade (UMR CNRS 7534)\\ Université Paris-Dauphine\\ Place de Lattre de Tassigny\\ 75775 Paris Cédex~16\\ France}
\email{dolbeaul@ceremade.dauphine.fr}
\author[M.J.~Esteban]{\firstname{Maria J.} \lastname{Esteban}}
\address{Ceremade (UMR CNRS 7534)\\ Université Paris-Dauphine\\ Place de Lattre de Tassigny\\ 75775 Paris Cédex~16\\ France}
\email{esteban@ceremade.dauphine.fr}
\author[M.~Loss]{\firstname{Michael} \lastname{Loss}}
\address{School of Mathematics,\\ Skiles Building\\ Georgia Institute of Technology\\ Atlanta GA 30332-0160\\ USA}
\email{loss@math.gatech.edu}
\thanks{Partially supported by the projects \emph{STAB} and \emph{Kibord} (J.D.) of the French National Research Agency (ANR), and by the NSF grant DMS-1301555 (M.L.)}
\keywords{Interpolation; functional inequalities; flows; optimal constants; semilinear elliptic equations; rigidity results; uniqueness; \emph{carré du champ} method; CD($\rho$,$N$) condition; heat flow; nonlinear diffusion; spectral gap inequality; Poincaré inequality; improved inequalities}
\subjclass[2010]{58J35; 26D10; 35J60}
\begin{document}

\selectlanguage{english}\begin{abstract} This paper is devoted to sharp interpolation inequalities on the sphere and their proof using flows. The method explains some rigidity results and proves uniqueness in related semilinear elliptic equations. Nonlinear flows allow to cover the interval of exponents ranging from Poincaré to Sobolev inequality, while an intriguing limitation (an upper bound on the exponent) appears in the \emph{carré du champ} method based on the heat flow. We investigate this limitation, describe a counter-example for exponents which are above the bound, and obtain improvements below.\end{abstract}

\selectlanguage{french}\begin{altabstract} Cet article est consacré à des inégalités d'interpolation optimales sur la sphère et à leur preuve par des flots. La méthode explique aussi certains résultats de rigidité et permet de prouver l'unicité dans des équations elliptiques semilinéaires associées. Les flots non-linéaires permettent de couvrir tout l'intervalle des exposants entre l'inégalité de Poincaré et l'inégalité de Sobolev, tandis qu'une limitation intrigante (une limite supérieure de l'exposant) apparaît dans la méthode du carré du champ basée sur le flot de la chaleur. Nous étudions cette limitation, décrivons un contre-exemple pour les exposants qui sont au-dessus de la borne, et obtenons des améliorations en-dessous.\end{altabstract}

\maketitle
\thispagestyle{empty}
\selectlanguage{english}

\section{Introduction}\label{Sec:Intro}

On the $d$-dimensional sphere, let us consider the interpolation inequality
\be{Ineq:GNS}
\nrm{\nabla u}2^2+\frac d{p-2}\,\nrm u2^2\ge\frac d{p-2}\,\nrm up^2\quad\forall\,u\in\mathrm H^1(\S^d,d\mu)\,,
\ee
where the measure $d\mu$ is the uniform probability measure on $\S^d\subset\R^{d+1}$ corresponding to the measure induced by the Lebesgue measure on $\R^{d+1}$, and the exposant $p\ge1$, $p\neq2$, is such that
\[
p\le2^*:=\frac{2\,d}{d-2}
\]
if $d\ge3$. We adopt the convention that $2^*=\infty$ if $d=1$ or $d=2$. The case $p=2$ corresponds to the logarithmic Sobolev inequality
\be{Ineq:logSob}
\nrm{\nabla u}2^2\ge\frac d2\,\iS{|u|^2\,\log\(\frac{|u|^2}{\nrm u2^2}\)}\quad\forall\,u\in\mathrm H^1(\S^d,d\mu)\setminus\{0\}\,.
\ee
In both cases, equality is achieved by any constant non-zero function and constants are optimal. Indeed, if we define
\[
\mathcal Q_p[u]:=\frac{(p-2)\,\nrm{\nabla u}2^2}{\nrm up^2-\nrm u2^2}\quad\mbox{and}\quad\mathcal Q_2[u]:=\frac{2\,\nrm{\nabla u}2^2}{\iS{|u|^2\,\log\Big(\frac{|u|^2}{\nrm u2^2}\Big)}}
\]
respectively for $p\neq2$ and for $p=2$, and consider an eigenfunction $\varphi$ associated with the first positive eigenvalue of the Laplace-Beltrami operator on $\S^d$, optimality can be checked by computing $\mathcal Q_p[1+\varepsilon\,\varphi]$ as $\varepsilon\to0$. Inequality~\eqref{Ineq:GNS} has been established in~\cite{BV-V} by rigidity methods, in~\cite{MR1230930} by techniques of harmonic analysis, and using the \emph{carré du champ} method in~\cite{MR1231419,MR1412446,MR2381156}, for any $p>2$. The case $p=2$ was studied in~\cite{MR674060}.

Here we shall focus on flow methods. In~\cite{MR772092,Bakry-Emery85,MR808640}, D.~Bakry and M.~Emery proved the inequalities using the \emph{heat flow} provided
\[
p\le2^\#:=\frac{2\,d^2+1}{(d-1)^2}\,.
\]
This special exponent is emphasized in~\cite{MR808640}. It is an important limitation, as we shall see in Section~\ref{Sec:Counter-Example}. Up to now, it was not known whether the limitation was of technical nature, or if there was a deep reason for it. Our main result is to build a counter-example which shows why heat flow methods definitely cannot cover the whole range of the exponents up to the critical exponent $2^*$ while nonlinear flows, with a proper choice of the nonlinearity, do it. \emph{Nonlinear flows} introduced in~\cite{MR2381156} provide a unified framework for \emph{rigidity} and \emph{carré du champ} methods as shown in~\cite{1302}. We refer to~\cite{MR2213477,MR3155209} for background references. More specialized papers will be quoted below.

On the other hand, in the range $p<2^\#$ which is covered by a heat flow method, we provide an \emph{improved inequality} with a constructive method under an integral constraint on the set of functions. See next section for details. We also provide a constructive estimate when $p\in[2^\#,2^*]$ under an antipodal symmetry contraint: see Theorem~\ref{Thm:Antipodal}.

The flow method applies to general compact manifolds but optimality is achieved only for spheres and not in the general case. The reader interested in differential geometry issues is invited to refer to~\cite{1302} and many other papers quoted therein. We will focus on the case of the sphere and use a simplified version of the inequality based on the ultraspherical operator to build our counter-examples.

\section{Flows and functional inequalities}\label{Sec:Flows}

If we define the functionals $\mathcal E_p$ and $\mathcal I_p$ respectively by
\[
\mathcal E_p[\rho]:=\frac1{p-2}\,\left[\iS{\rho^\frac2p}-\(\iS\rho\)^\frac2p\right]\quad\mbox{if}\quad p\neq2\,,
\]
\[
\mathcal E_2[\rho]:=\iS{\rho\,\log\(\frac\rho{\nrm\rho1}\)}\,,
\]
for $\rho>0$, and
\[
\mathcal I_p[\rho]:=\iS{|\nabla\rho^\frac1p|^2}\,,
\]
then inequalities~\eqref{Ineq:GNS} and~\eqref{Ineq:logSob} amount to $\mathcal I_p[\rho]\ge d\,\mathcal E_p[\rho]$ as can easily be checked using $\rho=|u|^p$. To establish such inequalities, one can use the heat flow
\be{Eqn:Heat}
\frac{\partial\rho}{\partial t}=\Delta\rho
\ee
where $\Delta$ denotes the Laplace-Beltrami operator on $\S^d$, and compute
\[
\frac d{dt}\mathcal E_p[\rho]=-\,\mathcal I_p[\rho]\quad\mbox{and}\quad\frac d{dt}\mathcal I_p[\rho]\le-\,d\,\mathcal I_p[\rho]\,.
\]
Details of the computation based on the \emph{carré du champ} will be given below. However, there is a strict limitation on the exponent, namely that $p\le2^\#$. If this condition is satisfied, we obtain that
\[
\frac d{dt}\Big(\mathcal I_p[\rho]-\,d\,\mathcal E_p[\rho]\Big)\le0\,.
\]
On the other hand, $\rho(t,\cdot)$ converges as $t\to\infty$ to a constant, namely $\iS\rho$ since $d\mu$ is a probability measure and $\iS\rho$ is conserved by~\eqref{Eqn:Heat}. As a consequence, $\lim_{t\to\infty}\(\mathcal I_p[\rho]-d\,\mathcal E_p[\rho]\)=0$, which proves that $\mathcal I_p[\rho(t,\cdot)]$ $ -\,d\,\mathcal E_p[\rho(t,\cdot)]\ge0$ for any $t\ge0$ and completes the proof. See~\cite{MR808640} for details. One may wonder whether the monotonicity property is also true for some $p>2^\#$. Our first result contains a negative answer to this question.
\begin{prop}\label{Prop:Counter-Example} For any $p\in(2^\#,2^*)$ or $p=2^*$ if $d\ge3$, there exists a function $\rho_0$ such that, if $\rho$ is a solution of~\eqref{Eqn:Heat} with initial datum $\rho_0$, then
\[
\frac d{dt}\Big(\mathcal I_p[\rho]-\,d\,\mathcal E_p[\rho]\Big)_{|t=0}>0\,.
\]
\end{prop}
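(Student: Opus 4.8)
The strategy is to reduce the problem to a one-dimensional computation using the ultraspherical setting and then test the second-derivative inequality $\frac{d}{dt}(\mathcal I_p[\rho]-d\,\mathcal E_p[\rho])\le 0$ against a carefully chosen perturbation near a constant. Concretely, I would write $\rho=\rho_s$ along the heat flow, parametrize $\rho_0 = 1 + \varepsilon\,f$ where $f$ is built from low-degree spherical harmonics, and expand everything to the relevant order in $\varepsilon$. Since $\mathcal I_p[\rho]-d\,\mathcal E_p[\rho]$ vanishes at the constant and its first variation vanishes too, the leading term in $\varepsilon$ of $\frac{d}{dt}(\mathcal I_p-d\,\mathcal E_p)$ at $t=0$ is quadratic in $\varepsilon$; the sign of its coefficient, as a function of $p$ and $d$, is what must be shown to become positive for $p>2^\#$.

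The key steps, in order: (1) recall the \emph{carr\'e du champ} identity that expresses $\frac{d}{dt}\mathcal I_p[\rho]+d\,\mathcal I_p[\rho]$ as (minus) an integral of a quadratic form in the Hessian and gradient of $\rho^{1/p}$ — this is where the exponent $2^\#$ enters as the threshold making the pointwise quadratic form sign-definite on \emph{all} functions. (2) Observe that $2^\#$ is precisely the value beyond which that quadratic form fails to be a sum of squares: there is a "direction" in the space of (Hessian, gradient) data where it takes the wrong sign. (3) Realize that direction by an actual function on $\S^d$: the natural candidate is a function depending on a single variable $x = \omega\cdot e$, i.e. built from a spherical harmonic of degree $1$ (or a combination with degree $2$), for which the Hessian and gradient are explicitly computable and the bad direction can be hit. (4) Plug this $\rho_0$ into $\frac{d}{dt}(\mathcal I_p[\rho]-d\,\mathcal E_p[\rho])$ at $t=0$, using $\frac{d}{dt}\mathcal E_p[\rho]=-\mathcal I_p[\rho]$ so that the whole expression equals $\frac{d}{dt}\mathcal I_p[\rho]_{|t=0}+d\,\mathcal I_p[\rho_0]$, and evaluate the resulting integral; it reduces to a rational function of $p$ and $d$ times $\varepsilon^2$ (plus higher order), whose sign is $>0$ exactly when $p>2^\#$. (5) For $\varepsilon$ small this dominates, giving the claim; the endpoint $p=2^*$ ($d\ge3$) is included since strict positivity at $p>2^\#$ persists up to and including $2^*$.

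The main obstacle is step (3)–(4): choosing the perturbation so that the bad direction of the pointwise quadratic form is genuinely activated rather than averaged away by integration over the sphere. A pure degree-one harmonic makes $\rho^{1/p}$ (hence $\rho$) affine in $x$, so its Hessian on $\S^d$ is proportional to $x$ times the metric — this gives one "knob"; one may need to superpose a degree-two component (a second knob, with its own small parameter) and optimize the two-parameter quadratic form to land in the region where the coefficient is negative, i.e. where monotonicity fails. Getting the algebra of this constrained optimization to produce the threshold exactly at $2^\# = \frac{2d^2+1}{(d-1)^2}$ is the delicate bookkeeping; everything else (the heat-flow derivative identities, convergence, the reduction to the ultraspherical operator) is standard and quoted from the references above.
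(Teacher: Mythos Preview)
Your plan has a genuine gap at the heart of steps (3)--(5): a perturbation of the constant cannot produce the counter-example, for any $p$.

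Work in the ultraspherical setting as you suggest, and recall the identity the paper records: along the heat flow with $\rho=|u|^p$,
\[
-\tfrac12\,\tfrac{d}{dt}\Big(\mathcal I_p[\rho]-d\,\mathcal E_p[\rho]\Big)
=\underbrace{\ix{|u''|^2\,\nu^2}-2\,\tfrac{d-1}{d+2}\,(p-1)\ix{u''\,\tfrac{|u'|^2}{u}\,\nu^2}+\tfrac{d}{d+2}\,(p-1)\ix{\tfrac{|u'|^4}{u^2}\,\nu^2}}_{=:Q[u]}\,.
\]
You need $Q[u]<0$. If $u=1+\varepsilon\,g$ (equivalently $\rho_0=1+\varepsilon f$), then $u''=\varepsilon\,g''$ while $|u'|^2/u=O(\varepsilon^2)$ and $|u'|^4/u^2=O(\varepsilon^4)$, so
\[
Q[u]=\varepsilon^2\ix{|g''|^2\,\nu^2}+O(\varepsilon^3)\ge0\quad\mbox{for small }\varepsilon,
\]
\emph{independently of} $p$. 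The ``bad direction'' you correctly identify in the pointwise quadratic form lives where $u''$ and $|u'|^2/u$ are of \emph{comparable} size; for a perturbation of a constant these two quantities are one order apart in $\varepsilon$, so the direction is never hit. Adding a degree-two harmonic does not help: the $\varepsilon^2$ term is still $\ix{|g''|^2\,\nu^2}\ge0$, and if you kill it by taking $g$ affine you are left with the positive $|u'|^4/u^2$ term.

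What the paper does instead is non-perturbative. It takes $u=(\mathsf a+\mathsf b\,z)^\gamma$, so that $u''$ and $|u'|^2/u$ are \emph{proportional}: $u''=\tfrac{\gamma-1}{\gamma}\,\tfrac{|u'|^2}{u}$. Then $Q[u]$ reduces to a single integral times the scalar
\[
s^2-2\,\tfrac{d-1}{d+2}(p-1)\,s+\tfrac{d}{d+2}(p-1)\,,\qquad s=\tfrac{\gamma-1}{\gamma}\,,
\]
whose discriminant is $-4\gamma_1$ and becomes positive exactly when $p>2^\#$. Choosing $s$ between the two roots (the paper parametrizes this through $\beta=\beta_-(p,d)$ coming from the nonlinear flow) makes the scalar negative and yields $Q[u]<0$ outright, with the explicit constant $\mathsf A>0$ of Proposition~\ref{prop:obstruction2}. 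The case $p=2^*$ is handled separately via the conformal family~\eqref{ConfSolutions}. So the missing idea in your plan is precisely to replace the small-$\varepsilon$ ansatz by this power-law test function.
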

To overcome the limitation $p\le2^\#$, one can consider a nonlinear diffusion of fast diffusion / porous medium type
\be{Eqn:FDE}
\frac{\partial\rho}{\partial t}=\Delta\rho^m\,.
\ee
With this flow, we no longer have $\frac d{dt}\mathcal E_p[\rho]=-\,\mathcal I_p[\rho]$ but can still prove that
\[
\mathcal K_p[\rho]:=\frac d{dt}\Big(\mathcal I_p[\rho]-\,d\,\mathcal E_p[\rho]\Big)\le0\,,
\]
for any $p\in[1,2^*]$. Proofs have been given in~\cite{MR2381156,1302}. We also refer to~\cite{DEKL2012,DEKL2014} for results which are more specific to the case of the sphere and of the ultraspherical operator, and further references therein. Except for $p=1$ and $p=2^*$ with $d\ge3$, there is some flexibility in the choice of $m$. It is enough to pick a special example for proving Proposition~\ref{Prop:Counter-Example}. Notice that we use a function related with the nonlinear diffusion equation~\eqref{Eqn:FDE} to prove the non-monotonicity property along the heat flow~\eqref{Eqn:Heat}. See Section~\ref{Sec:Counter-Example} for details and for a proof of Proposition~\ref{Prop:Counter-Example}.

For any $p<2^*$, existence of optimal functions in~\eqref{Ineq:GNS} and~\eqref{Ineq:logSob} is not an issue due to the compactness of Sobolev's embeddings. Instead of considering the whole flow, it is possible to take such an optimal function~$u$ (or more generically a positive critical point) as initial datum, compute the time-derivative $\mathcal K_p$ using the flow at $t=0$ (which is equal to $0$ because~$u$ is a critical point of $\mathcal I_p-\,d\,\mathcal E_p$), and use this computation to identify~$u$. This is the essence of the rigidity method as in~\cite{BV-V,MR1412446}: see~\cite{1302} for details and improvements. In the flow perspective, we can also make use of $\mathcal K_p$ to obtain improved inequalities: see~\cite{DEKL2014}. Here we use a function $u$ such that $\mathcal K_p[u]=0$ (along the nonlinear flow~\eqref{Eqn:FDE}) as initial datum for~\eqref{Eqn:Heat}, when $p=2^*$, and check that, for an appropriate choice of $m$, it satisfies the property of Proposition~\ref{Prop:Counter-Example}.

\medskip With no restriction, we can assume that $\iS\rho=1$. As $t\to\infty$, the equation~\eqref{Eqn:FDE} becomes equivalent to the heat flow~\eqref{Eqn:Heat}, which allows to relate best constants in~\eqref{Ineq:GNS} and~\eqref{Ineq:logSob} with the spectral gap, or Poincar\'e inequality, associated with the Laplace-Beltrami operator. Because of the improved inequalities that have been shown in~\cite{DEKL2014} (see the proof of Proposition~\ref{Prop:BE}), optimality can be achieved only in the asymptotic regime. This explains why the computation of $\mathcal Q_p[1+\varepsilon\,\varphi]$ as $\varepsilon\to0$ mentioned in Section~\ref{Sec:Intro} provides the optimal constant if $\varphi$ is an eigenfunction associated with the first positive eigenvalue of the Laplace-Beltrami operator. This also raises a very interesting question that we address in Section~\ref{Sec:Improvements} and goes as follows. If we assume that the initial datum satisfies
\[
\iS{x\,\rho}=0\,,
\]
is the decay rate of $\mathcal I_p-\,d\,\mathcal E_p$ along~\eqref{Eqn:FDE} faster and can we write that $\frac d{dt}\(\mathcal I_p[\rho]-\,\lambda\,\mathcal E_p[\rho]\)\le0$ for some $\lambda>d$ ? In other words, can we improve on the value of the infimum of $\mathcal Q_p[u]$ if we assume that $\iS{x\,|u|^p}=0$~? Notice indeed that, in the asymptotic regime as $t\to\infty$, this condition means that the solution of~\eqref{Eqn:Heat} is orthogonal to the eigenspace corresponding to the first positive eigenvalue of $-\,\Delta$, and hence proves that, for any $\varepsilon>0$, there exists a constant $C>0$, depending on $\varepsilon$, such that
\[
\mathcal E_p[\rho(t,\cdot)]\le C\,e^{-\big(2\,(d+1)-\varepsilon\big)\,t}\quad\forall\,t\ge0\,.
\]

Some partial results are known.

\smallskip
$\bullet$ If $p=1$, that is, in the linear case, Inequality~\eqref{Ineq:GNS} is equivalent to a Poincar\'e inequality
\[
\nrm{\nabla u}2^2\ge d\,\nrm{u-1}2^2\quad\forall\,u\in\mathrm H^1(\S^d,d\mu)\;\mbox{ such that}\;\iS u=1\,.
\]
With the additional condition that $\iS{x\,u}=0$, the inequality is improved to
\[
\nrm{\nabla u}2^2\ge2\,(d+1)\,\nrm{u-1}2^2
\]
as can be shown by a simple decomposition in spherical harmonics.
\\
$\bullet$ If $p=2^*$ and $d\ge3$, G.~Bianchi and H.~Egnell have shown in~\cite{MR1124290} that the Euclidean Sobolev inequality can be improved. Using an inverse stereographic projection, this exactly shows that $\lambda>d$ and we will give a similar argument in Section~\ref{Sec:Improvements}. However, this is argued by contradiction so that no explicit value of $\lambda$ is given.
\\
$\bullet$ If $d=2$, then $2^*=\infty$ and the Sobolev inequality has to be replaced by the Moser-Trudinger-Onofri inequality: see~\cite{1401} for considerations in this direction. This inequality states that
\begin{multline*}
\log\(\iStwo{e^u}\)\le\frac\alpha4\nrm{\nabla u}2^2\quad\forall\,u\in\mathrm H^1(\S^2,d\mu)\\
\mbox{ such that}\;\iStwo u=0\,,
\end{multline*}
with $\alpha=1$. It has been conjectured by A.~Chang and P.~Yang that $\alpha=1/2$ under the additional condition that $\iStwo{x\,e^u}=0$, but so far the best existing result has been obtained in~\cite{MR2670931} and shows that $\alpha\le2/3$.

Of course, a major difficulty comes from the fact that the property $\iS{x\,\rho}=0$ is not conserved by the flow of~\eqref{Eqn:FDE}, except if $m=1$ (and~\eqref{Eqn:FDE} coincides then with~\eqref{Eqn:Heat}), as we shall see next. This is why we can produce an explicit estimate for $\lambda$ only in the range $p\le2^\#$. Let us define
\[
\Lambda^\star\quad:=\inf_{\begin{array}{c}v\in\mathrm H^1_+(\S^d,d\mu)\\ \iS{v}=1\\ \iS{x\,|v|^p}=0\end{array}}\frac{\iS{{|\nabla v|^2}}}{\iS{|v-1|^2}}\,.
\]
Here $\mathrm H^1_+(\S^d,d\mu)$ denotes the a.e.~nonnegative functions in $\mathrm H^1(\S^d,d\mu)$.
\begin{thm}\label{Thm:Improvement} For any $p\in(2,2^*)$, there exists a constant $\Lambda>d$ such that
\be{Ineq:GNSimproved}
\nrm{\nabla u}2^2+\frac\Lambda{p-2}\,\nrm u2^2\ge\frac\Lambda{p-2}\,\nrm up^2
\ee
for any function $u\in\mathrm H^1(\S^d,d\mu)$ such that $\iS{x_i\,|u|^p}=0$ with $i=1,\,2,\ldots,d$. Moreover, if $p\le2^\#$, with $\Lambda^\star>d$, we have the estimate
\be{Ineq:lambda}
\Lambda\ge d+\frac{(d-1)^2}{d\,(d+2)}\,(2^\#-p)\,(\Lambda^\star-d)\,.
\ee
\end{thm}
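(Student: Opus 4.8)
The plan is to run the nonlinear flow \eqref{Eqn:FDE} starting from an optimal (or near-optimal) function for \eqref{Ineq:GNSimproved}, exploit the strict sign of $\mathcal K_p$ at $t=0$ coming from the improved Bakry--Emery computation of \cite{DEKL2014}, and quantify how much of the orthogonality constraint $\iS{x_i\,|u|^p}=0$ survives along the flow. For the first (qualitative) assertion, I would argue by contradiction: if no $\Lambda>d$ works, there is a minimizing sequence $u_n$ with $\iS{x_i\,|u_n|^p}=0$ along which $\mathcal Q_p[u_n]\to d$. By the strict subcriticality $p<2^*$ and compactness of the Sobolev embedding, after normalization $u_n\to u_\infty$ strongly in $\mathrm H^1$; then $u_\infty$ is an optimizer of the \emph{unconstrained} inequality \eqref{Ineq:GNS}, hence — by the rigidity/flow analysis recalled in the excerpt (optimality in \eqref{Ineq:GNS} holds only for constants) — $u_\infty$ is a nonzero constant. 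But $\iS{x_i\,|u_\infty|^p}=0$ is automatic for constants, so the constraint is vacuous in the limit and gives no contradiction directly; the point to push is instead that, expanding $\mathcal Q_p[1+\varepsilon\varphi]$, the infimum over functions orthogonal to the first spherical harmonics is strictly above $d$ at the linearized level (the first admissible eigenfunction is a second spherical harmonic, with eigenvalue $2(d+1)>d$), and the improved inequality of \cite{DEKL2014} upgrades this infinitesimal gap to a uniform one. This is the step I expect to require the most care: turning the spectral-gap-at-the-constant into a genuine global $\Lambda>d$, which is exactly where one invokes the improved entropy--entropy production estimate $\mathcal K_p\le 0$ with a \emph{remainder term} controlled by $\mathcal I_p$ and by the projection of $|u|^p$ onto degree-one harmonics.

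For the quantitative estimate \eqref{Ineq:lambda} when $p\le 2^\#$, the strategy is to compute, along the heat flow \eqref{Eqn:Heat} (equivalently, the $m=1$ case, where the constraint \emph{is} preserved), the refined differential inequality
\[
\frac d{dt}\Big(\mathcal I_p[\rho]-d\,\mathcal E_p[\rho]\Big)\le-\,c_{d,p}\,\big(\mathcal I_p[\rho]-d\,\mathcal E_p[\rho]\big)
\]
with the best possible $c_{d,p}$ obtained from the \emph{carr\'e du champ} identity when one additionally knows that $\rho$ has no degree-one component. The Bochner--Lichnerowicz--Weitzenb\"ock computation on $\S^d$ produces a term $\nrm{\nabla\rho^{1/p}}2^2$ times a factor that, under $p\le2^\#$, is nonnegative; keeping it rather than discarding it and using the improved Poincar\'e inequality $\nrm{\nabla v}2^2\ge\Lambda^\star\,\nrm{v-1}2^2$ on functions with $\iS{x\,|v|^p}=0$ yields the extra gain $(2^\#-p)(\Lambda^\star-d)$ up to the dimensional constant $\frac{(d-1)^2}{d(d+2)}$. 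Concretely: integrate the refined inequality from $0$ to $\infty$, use $\mathcal E_p[\rho(t)]\to 0$ and $\mathcal I_p[\rho(t)]\to 0$, and deduce $\mathcal I_p[\rho_0]\ge\Lambda\,\mathcal E_p[\rho_0]$ with $\Lambda=d+\frac{(d-1)^2}{d(d+2)}(2^\#-p)(\Lambda^\star-d)$; rewriting in terms of $u$ with $\rho=|u|^p$ gives \eqref{Ineq:lambda}. The factor $\frac{(d-1)^2}{d(d+2)}$ should emerge as the product of the coefficient of the discarded square term in the ultraspherical \emph{carr\'e du champ} (which carries $(d-1)^2$) and the normalization $\frac1{d(d+2)}$ relating the $\mathrm L^2$ norm of a degree-one harmonic to its Dirichlet energy; pinning down these constants exactly is the routine-but-delicate bookkeeping I would do last.

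The one genuine subtlety to flag is that for $m\neq 1$ the constraint $\iS{x\,\rho}=0$ is \emph{not} conserved, so the quantitative bound cannot be pushed past $2^\#$ by this route; this is why \eqref{Ineq:lambda} is stated only for $p\le 2^\#$, while the existence of \emph{some} $\Lambda>d$ for all $p\in(2,2^*)$ relies instead on the compactness/contradiction argument together with the improved inequality of \cite{DEKL2014}, which does not need the constraint to be propagated. I would therefore present the two halves of the theorem with two different proofs, sharing only the reduction $\rho=|u|^p$ and the normalization $\iS\rho=1$.
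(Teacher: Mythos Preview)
Your qualitative argument for $\Lambda>d$ is essentially the paper's: the improved inequality of \cite{DEKL2014} (with the strictly convex $\Phi$) forces any minimizing sequence to concentrate at a constant, and the constraint then excludes the first spherical harmonics as leading-order correction, contradicting $\mathcal Q_p\to d$. This is exactly Proposition~\ref{Prop:BE}.

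For the quantitative bound you have identified the right ingredients---the heat flow preserves the constraint, the \emph{carr\'e du champ} leaves a nonnegative remainder when $p\le 2^\#$, and an improved spectral estimate is available---but the mechanism is misstated in two places that are more than bookkeeping.

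First, the differential inequality you propose,
\[
\frac d{dt}\big(\mathcal I_p-d\,\mathcal E_p\big)\le -\,c_{d,p}\,\big(\mathcal I_p-d\,\mathcal E_p\big)\,,
\]
does \emph{not} integrate to $\mathcal I_p[\rho_0]\ge\Lambda\,\mathcal E_p[\rho_0]$: it only gives exponential decay of the deficit $\mathcal I_p-d\,\mathcal E_p$, and integrating from $0$ to $\infty$ leaves an uncontrolled term $\int_0^\infty\mathcal E_p\,dt$ on the wrong side. What the computation actually yields (see the proof of Proposition~\ref{Prop:ImprovementSimplfied}) is that, after a different regrouping of the \emph{carr\'e du champ} terms, the remainder is bounded below by a constant times $\mathcal I_p$ itself. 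Combined with $\frac d{dt}\mathcal E_p=-\,\mathcal I_p$, this gives directly $\frac d{dt}(\mathcal I_p-\lambda\,\mathcal E_p)\le0$ for the larger $\lambda$, and \emph{that} is what integrates to the inequality.

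Second, the spectral input is not the first-order Poincar\'e quotient $\Lambda^\star=\inf\,\iS{|\nabla v|^2}\big/\iS{|v-1|^2}$ that you invoke, but its second-order analogue $\inf\,\iS{(\Delta u)^2}\big/\iS{|\nabla u|^2}$, which is what appears naturally once one uses the Bochner-type identity (cf.~\eqref{Eqn:u''}) on the Hessian term produced by the \emph{carr\'e du champ}. The reduction of this second-order quotient to $\Lambda^\star$ is a separate argument (expand $\iS{|\Delta u+\mu\,(u-\bar u)|^2}\ge0$; this is Proposition~\ref{Prop:>d}), and it is precisely there that the positivity of $u$ together with the constraint $\iS{x\,|u|^p}=0$ is used to rule out the affine functions $u=a+b\cdot x$ and conclude strict inequality. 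That step is the substantive one, not the dimensional constants.
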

The strategy of the proof of this result will be given in Section~\ref{Sec:Improvements}. We will also give an estimate of $\Lambda^\star$ for the limit case $p=2$ of the logarithmic Sobolev inequality in Proposition~\ref{ImprovedLogSob}.

\section{The ultraspherical operator}\label{Sec:ultraspherical}

To avoid technicalities, we will work with the ultraspherical operator instead of the Laplace-Beltrami operator. As in~\cite{DEKL2012,DEKL2014}, we can indeed take coordinates for $x=(x',z)\in\S^d$ such that $|x'|^2+z^2=1$, with $x'\in\R^d$ and $z\in[-1,1]$. A simple symmetrization argument (see for instance~\cite{MR3052352}) shows that optimality in~\eqref{Ineq:GNS} and~\eqref{Ineq:logSob} is achieved by functions depending only on $z$ so that, in order to prove these inequalities, it is equivalent to establish the inequalities
\be{GNS-ultra}
-\scal f{\L f}=\ix{|f'|^2\;\nu}\ge\frac d{p-2}\,\(\nrmx fp^2-\nrmx f2^2\)
\ee
and
\be{LogSob-ultra}
-\scal f{\L f}\ge\frac d2\ix{|f|^2\,\log\(\frac{|f|^2}{\nrmx f2^2}\)}\,,
\ee
for any function $f\in\mathrm H^1((-1,1),d\nu_d)$. Here $\nrmx fq:=\(\ix{|f|^q}\)^{1/q}$ and $\L f$ denotes the \emph{ultraspherical} operator given by
\[
\L f:=(1-z^2)\,f''-d\,z\,f'=\nu\,f''+\frac d2\,\nu'\,f'\,,
\]
while $d\nu_d$ is the probability measure defined by
\[
\nu_d(z)\,dx=d\nu_d(z):=Z_d^{-1}\,\nu^{\frac d2-1}\,dx\quad\mbox{with}\quad\nu(z):=1-z^2\,,
\]
and the normalization constant is $Z_d=\sqrt\pi\,\frac{\Gamma\big(\tfrac d2\big)}{\Gamma\big(\tfrac{d+1}2\big)}$.

With the scalar product $\scal{f_1}{f_2}=\ix{f_1\,f_2}$ defined on the space $\mathrm L^2((-1,1),d\nu_d)$, let us recall that the main property of $\L$ is
\[
\scal{f_1}{\L f_2}=-\ix{f_1'\,f_2'\;\nu}\,.
\]
We refer to~\cite{MR674060,MR1260331,MR2641798,MR1231419,MR1971589,MR1435336,MR1651226,DEKL2012} for more references. The next lemma, which is taken from~\cite[Inequalities~(3.2)~and~(3.3)]{DEKL2012}, gives two elementary but very useful identities.
\begin{lemma}\label{lem:identies}
For any positive smooth function $f$ on $(-1,1)$, we have
\begin{eqnarray*}
&&\ix{(\L f)^2}=\ix{|f''|^2\;\nu^2}+d\ix{|f'|^2\;\nu}\,,\\
&&\scal{\frac{|f'|^2}f\;\nu}{\L f}=\frac d{d+2}\kern-1.5pt\ix{\frac{|f'|^4}{f^2}\;\nu^2}-\,2\,\frac{d-1}{d+2}\kern-1.5pt\ix{\frac{|f'|^2\,f''}f\;\nu^2}\,.
\end{eqnarray*}\end{lemma}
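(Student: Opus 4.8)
The two identities in Lemma~\ref{lem:identies} are purely integration-by-parts statements, so the plan is to prove both by repeated use of the defining relation $\scal{f_1}{\L f_2}=-\ix{f_1'\,f_2'\;\nu}$ together with $\L f=\nu\,f''+\frac d2\,\nu'\,f'$ and $\nu'=-2z$, $\nu''=-2$. Throughout I will assume $f$ is smooth and positive on $(-1,1)$ and that all boundary terms vanish: the weight $\nu^{d/2}$ (and $\nu^{d/2+1}$ in the terms involving $\nu^2$) kills the endpoint contributions since $\nu(\pm1)=0$, so every integration by parts against $d\nu_d$ is legitimate with no boundary remainder. This vanishing of boundary terms is the one point that should be stated carefully but is not a genuine obstacle.

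For the first identity, I would write $\ix{(\L f)^2}=-\ix{f'\,(\L f)'\;\nu}$ using the main property of $\L$ with $f_1=f$ and $f_2=\L f$ — more precisely, $\scal{f}{\L(\L f)}=-\ix{f'\,(\L f)'\,\nu}$ and $\scal{f}{\L(\L f)}=\scal{\L f}{\L f}$ by self-adjointness. Then I would differentiate $\L f=(1-z^2)f''-d\,z\,f'$ to get $(\L f)'=(1-z^2)f'''-(d+2)\,z\,f''-d\,f'$, multiply by $-f'\,\nu=-(1-z^2)\,f'$, expand, and integrate. The term $\ix{(1-z^2)^2 f'\,f'''}$ is integrated by parts once more to produce $\ix{|f''|^2\,\nu^2}$ plus lower-order pieces involving $\ix{z\,(1-z^2)\,f'\,f''}$ and $\ix{|f'|^2}$-type terms; collecting everything (and integrating the cross term $\ix{z\,\nu\,f'\,f''}=\tfrac12\ix{z\,\nu\,(|f'|^2)'}$ by parts once to turn it into a multiple of $\ix{|f'|^2\,\nu}$ after using $(z\,\nu)'=\nu-2z^2=1-3z^2$) yields exactly $\ix{|f''|^2\,\nu^2}+d\ix{|f'|^2\,\nu}$. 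This is a finite bookkeeping exercise; the only care needed is tracking the constants $d$ and $d+2$.

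For the second identity, I would apply the main property of $\L$ with $f_1=\dfrac{|f'|^2}{f}\,\nu$ and $f_2=f$, giving $\scal{\frac{|f'|^2}{f}\,\nu}{\L f}=-\ix{\Big(\frac{|f'|^2}{f}\,\nu\Big)'\,f'\,\nu}$. Differentiating the bracket, $\Big(\frac{|f'|^2}{f}\,\nu\Big)'=\frac{2f'f''}{f}\,\nu-\frac{|f'|^4}{f^2}\,\nu+\frac{|f'|^2}{f}\,\nu'$, so after multiplying by $-f'\,\nu$ the right-hand side becomes $-2\ix{\frac{|f'|^2 f''}{f}\,\nu^2}+\ix{\frac{|f'|^4}{f^2}\,\nu^2}-\ix{\frac{|f'|^3}{f}\,\nu\,\nu'}$. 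The remaining task is to show the last integral equals $-\frac{2}{d+2}\ix{\frac{|f'|^4}{f^2}\,\nu^2}+\frac{2}{d+2}\ix{\frac{|f'|^2 f''}{f}\,\nu^2}$, i.e.\ to reabsorb the $\nu\,\nu'$ term; this is done by writing $\frac{|f'|^3}{f}\,\nu\,\nu'=\frac13\Big(\frac{|f'|^3}{f}\Big)\cdot 3\,\nu^{d/2}\,\nu'/\nu^{d/2}$ — more cleanly, integrate $\ix{\frac{|f'|^3}{f}\,\nu\,\nu'}$ by parts against the measure, noting $\nu\,\nu'\,\nu^{d/2-1}=\frac{2}{d+2}\,(\nu^{d/2+1})'$ up to sign, so that $\ix{\frac{|f'|^3}{f}\,\nu\,\nu'}=-\frac{2}{d+2}\,Z_d^{-1}\int_{-1}^1 \Big(\frac{|f'|^3}{f}\Big)'\,\nu^{d/2+1}\,dx = -\frac{2}{d+2}\ix{\Big(\frac{|f'|^3}{f}\Big)'\,\nu^{2}}$, and then $\Big(\frac{|f'|^3}{f}\Big)'=\frac{3|f'|f''\cdot\mathrm{sgn}(f')\cdot|f'|}{f}-\frac{|f'|^3}{f^2}f'$; for $f'$ of one sign this is $\frac{3 f'^2 f''}{f}-\frac{f'^4}{f^2}$ (the general case follows since $|f'|^3$ is $C^1$). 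Substituting and solving the resulting linear relation for the two target integrals gives the coefficients $\frac{d}{d+2}$ and $-2\,\frac{d-1}{d+2}$.

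I expect no serious obstacle here: both identities are elementary once one commits to careful integration by parts. The only two delicate points are (i) justifying that all boundary terms vanish — which follows from the degeneracy $\nu(\pm1)=0$ and the powers of $\nu$ present — and (ii) correctly handling the absolute values $|f'|$ and $|f'|^3$ when $f'$ changes sign, which is harmless because $|f'|^2$ and $|f'|^4$ are smooth and $|f'|^3$ is $C^1$ with the expected derivative, so the formal computation valid on intervals where $f'\neq0$ extends by continuity. Since the statement is quoted verbatim from~\cite[Inequalities~(3.2)~and~(3.3)]{DEKL2012}, it would also be legitimate simply to cite that reference, but the self-contained two-line-each verification above is short enough to include.
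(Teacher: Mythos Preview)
Your approach is correct and is exactly the natural one: both identities follow from repeated integration by parts using the self-adjointness relation $\scal{f_1}{\L f_2}=-\ix{f_1'\,f_2'\,\nu}$ and the formula $\nu^{d/2}\nu'=\tfrac{2}{d+2}\,(\nu^{d/2+1})'$. The paper does not actually prove the lemma; it simply quotes it from \cite[Inequalities~(3.2)~and~(3.3)]{DEKL2012}, and what you outline is precisely the computation behind that citation.

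Two small points to clean up. First, the term you denote $\ix{\frac{|f'|^3}{f}\,\nu\,\nu'}$ should really be $\ix{\frac{(f')^3}{f}\,\nu\,\nu'}$, since it arises from $|f'|^2\cdot f'=(f')^3$; this makes the integrand smooth and your concern about $|f'|^3$ being only $C^1$ disappears. Second, your intermediate claim that $-\ix{\frac{(f')^3}{f}\,\nu\,\nu'}$ equals $-\frac{2}{d+2}\ix{\frac{|f'|^4}{f^2}\,\nu^2}+\frac{2}{d+2}\ix{\frac{|f'|^2 f''}{f}\,\nu^2}$ has a slip in the coefficient of the $f''$ term: carrying out the integration by parts you describe gives $\frac{6}{d+2}$, not $\frac{2}{d+2}$, for that coefficient. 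With this correction the final combination $-2+\frac{6}{d+2}=-\frac{2(d-1)}{d+2}$ and $1-\frac{2}{d+2}=\frac{d}{d+2}$ matches the statement exactly.
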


Now let us rephrase the flow methods in the framework of the ultraspherical operator. With $\rho=|f|^p$, Inequality~\eqref{GNS-ultra} can be rewritten as
\[
\mathsf F[\rho]:=\frac1d\ix{|(\rho^{1/p})'|^2\,\nu}-\frac1{p-2}\left[\(\ix\rho\)^{2/p}-\ix{\rho^{2/p}}\right]\ge0
\]
if $p\neq2$. In the case $p=2$, Inequality~\eqref{LogSob-ultra} can be rewritten as
\[
\mathsf F[\rho]:=\frac1d\ix{|(\sqrt\rho)'|^2\,\nu}-\frac12\ix{\rho\log\(\frac\rho{\ix\rho}\)}\ge0\,.
\]
Let us consider its evolution that along the heat flow
\be{heat}
\frac{\partial\rho}{\partial t}=\L\rho\,,
\ee
The following result has been established by D.~Bakry and M.~Emery in~\cite{MR808640}.
\begin{prop}\label{Prop:linear} Assume that either $d>1$ and $p\in[1,2^\sharp]$, or $d=1$ and $p\ge1$. If $\rho$ solves~\eqref{heat}, then the functional $\mathsf F[\rho]$ is nonincreasing.\end{prop}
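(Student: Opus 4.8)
The plan is to show that $\frac{d}{dt}\mathsf F[\rho]\le 0$ by differentiating the two pieces of $\mathsf F$ along the heat flow~\eqref{heat} and establishing, via the \emph{carr\'e du champ} identities of Lemma~\ref{lem:identies}, that the resulting quantity is a sum of squares with a sign-definite remainder precisely when $p\le 2^\sharp$. First I would reduce to the case $p\neq 2$ (the case $p=2$ being obtained by letting $p\to 2$, or handled in parallel with the logarithmic entropy) and introduce the convenient substitution $\rho = f^p$, or equivalently work directly with $g:=\rho^{1/p}$, so that $\mathsf F$ is expressed in terms of $\ix{|g'|^2\,\nu}$ and $\ix{g^2}$, $\ix{g^p}$.

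The key computation is the evolution of the two terms. Writing $\mathcal E_p[\rho]=\frac1{p-2}\big[(\ix\rho)^{2/p}-\ix{\rho^{2/p}}\big]$ and $\mathcal I_p[\rho]=\ix{|(\rho^{1/p})'|^2\,\nu}$, one checks using $\scal{f_1}{\L f_2}=-\ix{f_1'f_2'\,\nu}$ and the conservation of $\ix\rho$ along~\eqref{heat} that $\frac{d}{dt}\mathcal E_p[\rho]=-\,\mathcal I_p[\rho]$, so that $\frac{d}{dt}\mathsf F[\rho]=\frac1d\big(\frac{d}{dt}\mathcal I_p[\rho]+d\,\mathcal I_p[\rho]\big)$. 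Hence everything reduces to proving the differential inequality $\frac{d}{dt}\mathcal I_p[\rho]\le -\,d\,\mathcal I_p[\rho]$. Differentiating $\mathcal I_p$ and integrating by parts produces an expression involving $\ix{(\L g)^2}$, $\scal{\frac{|g'|^2}{g}\,\nu}{\L g}$ and $\ix{\frac{|g'|^2}{g}\,\L(\cdot)}$-type terms; substituting the two identities of Lemma~\ref{lem:identies} rewrites the whole thing in terms of the three integrals $\ix{|g''|^2\nu^2}$, $\ix{\frac{|g'|^2 g''}{g}\nu^2}$ and $\ix{\frac{|g'|^4}{g^2}\nu^2}$ (plus $\ix{|g'|^2\nu}$, which combines with the $d\,\mathcal I_p$ term).

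The heart of the matter is then an algebraic one: after collecting coefficients (which depend on $p$ and $d$), the quadratic form in the two quantities $A:=g''\,\nu$ and $B:=\frac{|g'|^2}{g}\,\nu$ must be shown to be a nonpositive combination, i.e. one writes $-\frac{d}{dt}\mathcal I_p[\rho]-d\,\mathcal I_p[\rho]$ as a sum $\alpha\ix{(A-\beta B)^2}+\gamma\ix{B^2}$ with $\alpha\ge0$ and $\gamma\ge0$. The constraint $\gamma\ge0$ is exactly the condition $p\le 2^\sharp=\frac{2d^2+1}{(d-1)^2}$; when $d=1$ the $B$-terms drop out (since $\nu'$ and the problematic cross-term carry factors that vanish or the coefficient structure degenerates), which is why no upper bound on $p$ is needed there. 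I expect the main obstacle to be precisely this bookkeeping step: correctly tracking all the coefficients through the two substitutions from Lemma~\ref{lem:identies} and identifying the completion-of-squares decomposition, including the boundary-term justification (the weight $\nu^{d/2}$ and $\nu^{d/2-1}$ make the boundary contributions vanish for smooth $f$, but this must be checked, together with an approximation argument to pass from positive smooth data to general $\rho$). Once the sum-of-squares form is in hand, nonincreasingness of $\mathsf F[\rho]$ is immediate, and the proposition follows.
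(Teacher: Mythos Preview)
Your proposal is correct and follows essentially the same route as the paper. The paper sets $u=\rho^{1/p}$ (your $g$), computes $-\tfrac12\,\tfrac{d}{dt}\ix{\big(|u'|^2\nu+\tfrac{d}{p-2}(|u|^2-\overline u^2)\big)}$ directly using Lemma~\ref{lem:identies}, and completes the square to obtain the remainder $\gamma_1=\tfrac{d}{d+2}(p-1)-\big(\tfrac{d-1}{d+2}(p-1)\big)^2$, whose nonnegativity is exactly the condition $p\le2^\#$ for $d>1$ (and $\gamma_1=\tfrac{p-1}{3}>0$ for $d=1$, since the cross-term coefficient $\tfrac{d-1}{d+2}$ vanishes); your decomposition into $\tfrac{d}{dt}\mathcal E_p=-\mathcal I_p$ and $\tfrac{d}{dt}\mathcal I_p\le-d\,\mathcal I_p$ is the same computation organized slightly differently.
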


But if we consider $p$ belonging to the larger interval $[1,2^*]$, the functional $\mathsf F[\rho]$ is nonincreasing along the fast diffusion / porous medium flow
\be{FDE}
\frac{\partial\rho}{\partial t}=\L\rho^m\,,
\ee
with
\be{m}
m=1+\frac2p\(\frac1\beta-1\)\,,
\ee
and an appropriate choice of $\beta$: see~\cite{MR2381156,1302,DEKL2012,DEKL2014} for detailed results. Here is a summary of the results when $p\ge 1$. Let us define the numbers
\[
\beta_\pm(p,d):=\frac{d^2-d\,(p-5)-2\,p+6\pm(d+2)\,\sqrt{d\,(d-2)\,(p-1)\,(2^*-p)}}{d^2\,\(p^2-3\,p+3\)-2\,d\,(p^2-3)+(p-3)^2}\,,
\]
which are the roots of a second order polynomial $\beta\mapsto\gamma(\beta)$ whose expression can be found in Section~\ref{Sec:Counter-Example}. Notice that $\beta_+$ and $\beta_-$ coincide when $p=2$: $\beta_\pm(2^*)=(d-2)/(d-3)$. The denominator
\[
\delta(p,d):=d^2\,\(p^2-3\,p+3\)-2\,d\,(p^2-3)+(p-3)^2
\]
is positive if and only if one of the following condition is satisfied:
\begin{itemize}
\item[$\bullet$] $d\ge5$,
\item[$\bullet$] $d=4$ and $p\neq3$,
\item[$\bullet$] $d=2$ or $d=3$ and $p\not\in[p_-(d),p_+(d)]$ where $p_\pm(d)$ are the two roots of the equation $\delta(p,d)=0$,
\item[$\bullet$] $d=1$ and $p<2$.
\end{itemize}
Notice that the case $d=3$ and $p=6$ formally corresponds to $\beta=+\infty$ and deserves a spacial treatment. It is covered with $m=2/3$ in~\eqref{FDE}.
\begin{prop}\label{prop:SufficientConsidtion} Let $p\in[1,2^*]$ and either $\beta\in[\beta_-(p,d),\beta_+(p,d)]$ if $\delta(p,d)>0$, or $\beta\in(-\infty,\beta_+(p,d)]\cup[\beta_-(p,d),+\infty)$ if $\delta(p,d)<0$. If $\delta(\bar p,d)=0$ for some $\bar p=0$, we assume that the range of admissible values for $\beta$ is the limit of the range as $p\to\bar p_-$. Then $\frac d{dt}\mathsf F[\rho]\le0$ if $\rho$ solves~\eqref{FDE}.\end{prop}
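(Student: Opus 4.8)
The plan is to compute $\frac{d}{dt}\mathsf F[\rho]$ along the flow~\eqref{FDE} explicitly and to show that, after a change of unknown to $f=\rho^{1/p}$ (so that $\mathsf F$ becomes the deficit in~\eqref{GNS-ultra}), the resulting expression is a sum of squares with nonnegative coefficients precisely when $\beta$ lies in the stated range. First I would differentiate $\mathcal I_p[\rho]=\frac1d\int_{-1}^1 |(\rho^{1/p})'|^2\,\nu\,d\nu_d$ and the energy part of $\mathsf F$ using $\partial_t\rho=\L\rho^m$; since along the flow $\frac{d}{dt}\,(\text{energy part})$ produces $-\,\frac1d\int (\L\rho^m)\,\rho^{2/p-1}\,d\nu_d$ up to constants, and the gradient term contributes $\frac2{dp}\int (\rho^{1/p})'\,(\rho^{1/p-1}\L\rho^m)'\,\nu\,d\nu_d$, one integrates by parts using the defining property $\scal{f_1}{\L f_2}=-\int f_1'f_2'\,\nu\,d\nu_d$ to express everything in terms of $f$, $f'$, $f''$ weighted by $\nu$, $\nu^2$.

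The decisive step is to invoke Lemma~\ref{lem:identies}. The first identity replaces $\int(\L f)^2$ by $\int |f''|^2\nu^2+d\int|f'|^2\nu$, and the second identity handles the cross term $\scal{\frac{|f'|^2}f\nu}{\L f}$, converting $\int\frac{|f'|^2 f''}f\nu^2$ into a combination of $\int(\L f)$-type and $\int\frac{|f'|^4}{f^2}\nu^2$ terms. After these substitutions, $-\frac{d}{dt}\mathsf F[\rho]$ becomes a quadratic form in the two quantities $\nu\,f''$ and $\frac{|f'|^2}{f}\,\nu$ (integrated against $d\nu_d$), with coefficients that are polynomials in $p$, $d$, and $\beta$ through the relation~\eqref{m}. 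The condition for this quadratic form to be a nonnegative combination — concretely, that it can be written as $\int\big|\nu f'' + c\,\tfrac{|f'|^2}{f}\,\nu\big|^2\,d\nu_d$ times a nonnegative factor, plus a nonnegative multiple of $\int\tfrac{|f'|^4}{f^2}\nu^2\,d\nu_d$ — is exactly that the discriminant-type polynomial $\gamma(\beta)$ (whose roots are $\beta_\pm(p,d)$) has the correct sign. This is where the sign of $\delta(p,d)$ enters: when $\delta>0$ the parabola $\gamma$ opens upward and the admissible set is the bounded interval $[\beta_-,\beta_+]$; when $\delta<0$ it opens downward and the admissible set is the complement, the two unbounded rays.

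I would then deal with the boundary and degenerate cases separately: the endpoints $p=1$, $p=2$, $p=2^*$ (with $d\geq 3$), where the choice of $m$ is rigid, are checked by continuity or by direct inspection (the $p=2$ case collapsing $\beta_\pm$ to a single value, the $d=3$, $p=6$ case treated with $m=2/3$ as already noted), and the limiting prescription $\beta\to\bar p_-$ when $\delta(\bar p,d)=0$ handles the transition between the two regimes. The main obstacle is purely computational: carrying out the integrations by parts without error and correctly identifying the quadratic form, because the bookkeeping of the $\nu$- and $\nu^2$-weighted integrals is delicate and a sign slip anywhere changes $\gamma(\beta)$. Since detailed proofs already exist in~\cite{MR2381156,1302,DEKL2012,DEKL2014}, I would present the reduction to the quadratic form carefully and then quote those references for the verification that the admissible $\beta$-range is precisely the one governed by $\beta_\pm(p,d)$ and the sign of $\delta(p,d)$.
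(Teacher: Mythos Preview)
Your strategy is essentially the paper's: differentiate $\mathsf F$ along~\eqref{FDE}, apply the two identities of Lemma~\ref{lem:identies}, and reduce to a quadratic form whose nonnegativity is equivalent to $\gamma(\beta)\ge0$. Two small corrections are worth noting. First, the paper carries out the computation not with $f=\rho^{1/p}$ but with $w$ defined by $\rho=w^{\beta p}$ (so that $u=w^\beta$); this turns~\eqref{FDE} into the clean equation~\eqref{Eqn:Nonlinear} and makes the parameter $\beta$ enter the quadratic form directly, whereas in your variable $\beta$ only appears implicitly through $m$ and the algebra is heavier. Second, you have the orientation of the parabola reversed: since $\gamma(\beta)=-1+2\,\mathsf b\,\beta-\mathsf a\,\beta^2$ with $\mathsf a=\delta(p,d)/(d+2)^2$, the parabola opens \emph{downward} when $\delta>0$, which is exactly why the admissible set $\{\gamma\ge0\}$ is the bounded interval $[\beta_-,\beta_+]$ in that case and the unbounded complement when $\delta<0$.
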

The result of Proposition~\ref{Prop:linear} is obtained by checking for which values of $p$ the case $\beta=1$ is admissible in Proposition~\ref{prop:SufficientConsidtion}. In both cases, the method provides only a sufficient condition. See Figs.~\ref{F1} and~\ref{F2} for an illustration when $d=5$.
\begin{figure}[ht]
\includegraphics[width=6cm]{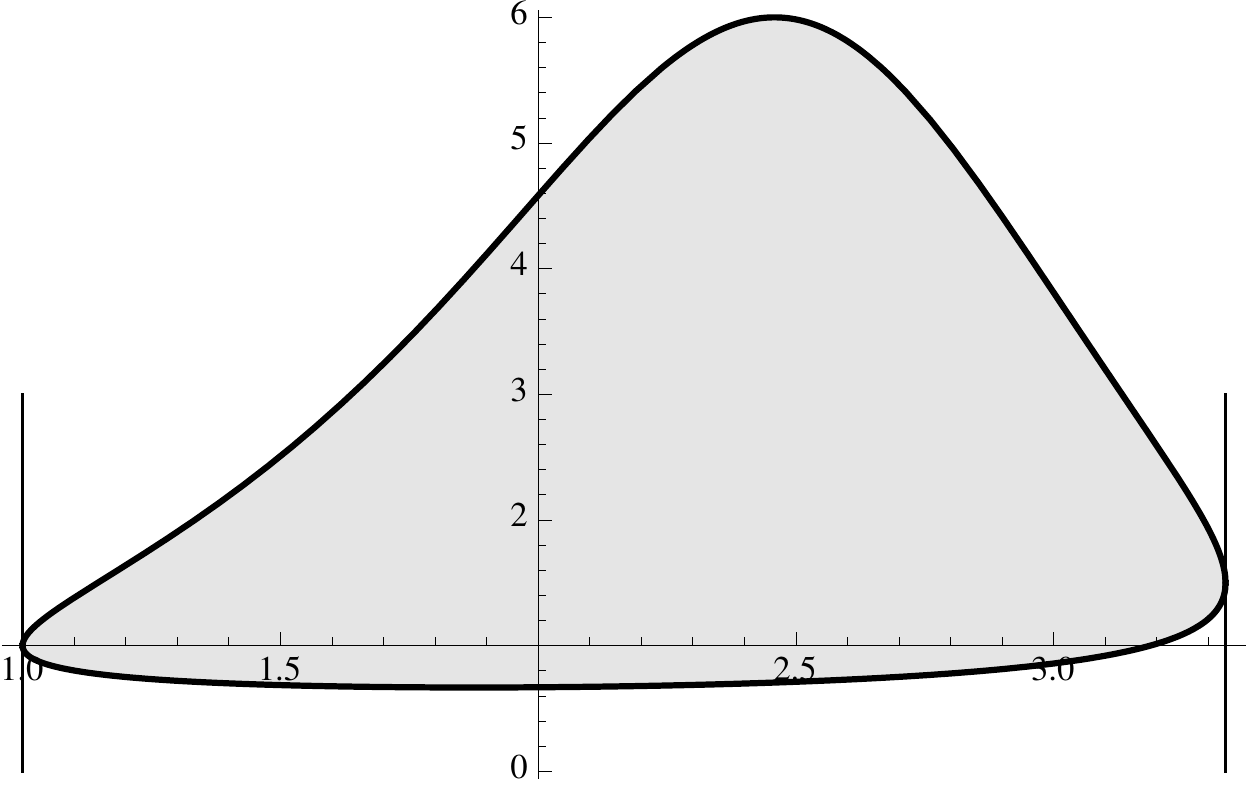}
\caption{\label{F1}\small\sl The gray area corresponds to the $(p,\beta)$ admissible region in which $\mathsf F[\rho]$ is monotone nonincreasing if~$\rho$ solves~\eqref{FDE}, in the case $d=5$. It is delimited by the curves $p\mapsto\beta_\pm(p,d)$. Similar patterns occur in higher dimensions. When $1\le d\le4$, the admissible region is slightly more complicated: see~\cite{DEKL2014} for details. In any dimension $d\ge2$, the line $\beta=1$ intersects $p\mapsto\beta_-(p)$ at $p=2^\#$. For any $d\ge1$, there exists an admissible value of $\beta$ for any $p\in[1,2^*)$ and also for $p=2^*$ if $d\ge3$.}
\end{figure}
\begin{figure}[ht]
\includegraphics[width=6cm]{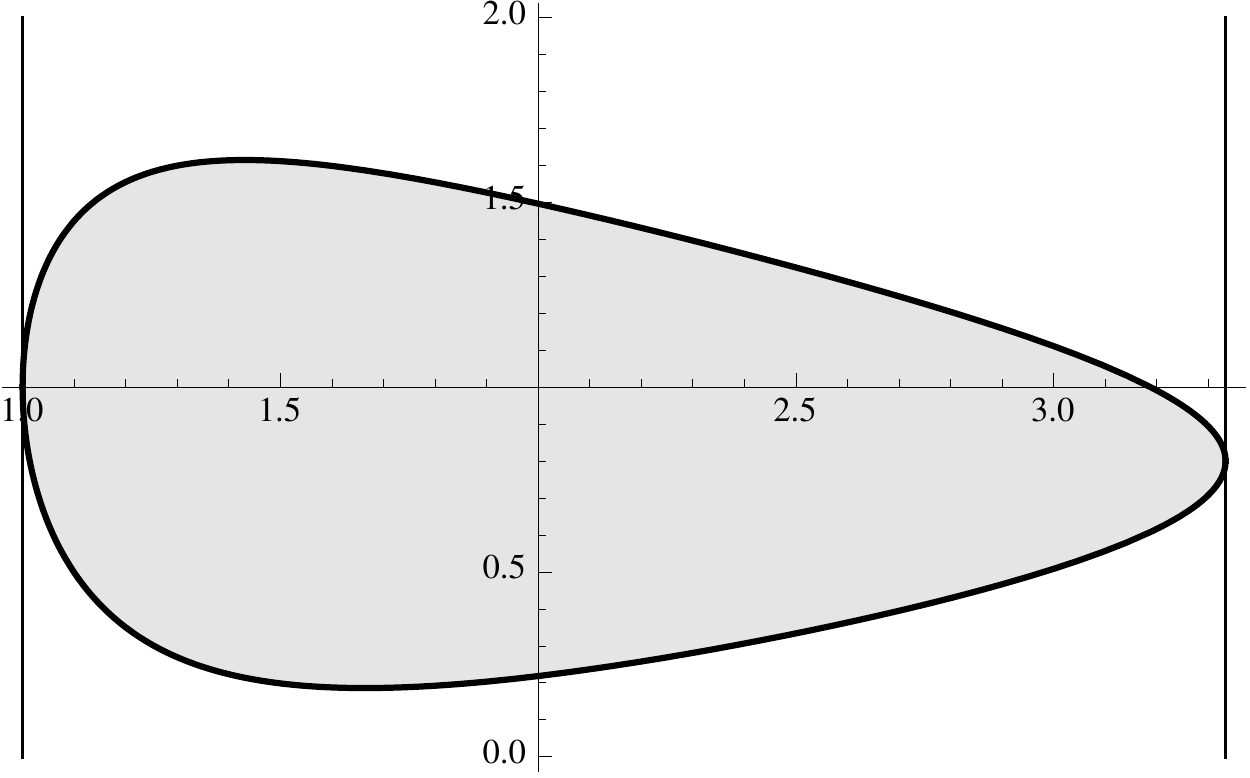}
\caption{\label{F2}\small\sl The gray area corresponds to the $(p,m)$ admissible region, in the case $d=5$, where $m$ is the exponent in~\eqref{FDE} and given in terms of $m$ by~\eqref{m}. The case $m>1$ and $m<1$ correspond respectively to the porous medium and fast diffusion cases, while the threshold case $m=1$, which is limited to $p\in[1,2^\#]$, is the special case of the heat equation~\eqref{heat}.}
\end{figure}

\section{A counter-example for the heat flow}\label{Sec:Counter-Example}

The conditions $p\le2^\#$ and $p\le2^*$ are only sufficient conditions for the monotonicity of $\mathsf F[\rho]$ under the action of~\eqref{heat} and~\eqref{FDE}, and one can wonder, for instance, if the monotonicity can be established for larger values of $p$ under the action of the heat flow~\eqref{heat}.

\medskip A {\bf\emph{first obstruction}} arises from the fact that for $p=2^*$, due to conformal invariance properties on the sphere, optimality in~\eqref{GNS-ultra} is achieved not only by the constant functions but also by any function of the form
\be{ConfSolutions}
u(z)=\(\mathsf a+\mathsf b\,z\)^{-\frac{d-2}2}\quad\forall\,z\in(-1,1)\,.
\ee
 Indeed we have the following technical result.
\begin{prop}\label{prop:obstruction1} If $d\ge3$ and $p=2^*$, the function
\[
\rho(t,x)=\big(\mathsf a(t)+\mathsf b(t)\,z\big)^{-d}
\]
is positive and solves~\eqref{FDE} with $m=1-\frac1d$ if and only if
\begin{eqnarray*}
&&\mathsf a(t)=\omega\,\coth\((d-1)\,\omega\,(t+t_0)\)\,,\\
&&\mathsf b(t)=\pm\,\omega\,\mathrm{csch}\((d-1)\,\omega\,(t+t_0)\)\,,
\end{eqnarray*}
for some nonnegative integration constants $\omega$ and $t_0$.\end{prop}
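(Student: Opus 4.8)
The plan is to verify Proposition~\ref{prop:obstruction1} by a direct substitution of the ansatz $\rho(t,z)=(\mathsf a(t)+\mathsf b(t)\,z)^{-d}$ into the fast-diffusion equation~\eqref{FDE} with $m=1-\tfrac1d$, reducing the PDE to a coupled system of ODEs for $\mathsf a$ and $\mathsf b$, and then solving that system. First I would compute the right-hand side: since $\rho^m=(\mathsf a+\mathsf b\,z)^{-d\,m}=(\mathsf a+\mathsf b\,z)^{-(d-1)}$, set $g:=\mathsf a+\mathsf b\,z$, so $\rho^m=g^{-(d-1)}$ with $\partial_z g=\mathsf b$ and $\partial_z^2 g=0$. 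Using $\L f=\nu\,f''+\tfrac d2\,\nu'\,f'$ with $\nu=1-z^2$, $\nu'=-2z$, one finds $(\rho^m)'=-(d-1)\,\mathsf b\,g^{-d}$ and $(\rho^m)''=(d-1)\,d\,\mathsf b^2\,g^{-(d+1)}$, hence
\[
\L\rho^m=(d-1)\,d\,\mathsf b^2\,(1-z^2)\,g^{-(d+1)}+(d-1)\,d\,\mathsf b\,z\,g^{-d}\,.
\]
On the left side, $\partial_t\rho=-d\,g^{-(d+1)}\,(\dot{\mathsf a}+\dot{\mathsf b}\,z)$. The main bookkeeping step is to clear the common factor $-d\,g^{-(d+1)}$ and rewrite everything as a polynomial identity in $z$ after multiplying the $g^{-d}$ term by $g=\mathsf a+\mathsf b\,z$; this turns the equation into $\dot{\mathsf a}+\dot{\mathsf b}\,z=-(d-1)\,\mathsf b^2(1-z^2)-(d-1)\,z(\mathsf a+\mathsf b\,z)$, i.e. a quadratic in $z$ whose coefficients must each vanish.

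Matching the coefficients of $z^0$, $z^1$ and $z^2$ then yields the ODE system. The $z^2$ coefficient gives $(d-1)(\mathsf b^2-\mathsf b^2)=0$, which is automatically satisfied (this cancellation is exactly why $m=1-\tfrac1d$ is the distinguished exponent); the $z^1$ coefficient gives $\dot{\mathsf b}=-(d-1)\,\mathsf a\,\mathsf b$, and the $z^0$ coefficient gives $\dot{\mathsf a}=-(d-1)\,\mathsf b^2$. I would then solve this autonomous planar system: note that $\tfrac d{dt}(\mathsf a^2-\mathsf b^2)=2\mathsf a\dot{\mathsf a}-2\mathsf b\dot{\mathsf b}=-2(d-1)\mathsf a\mathsf b^2+2(d-1)\mathsf a\mathsf b^2=0$, so $\mathsf a^2-\mathsf b^2=\omega^2$ is a conserved quantity. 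Substituting $\mathsf b^2=\mathsf a^2-\omega^2$ into $\dot{\mathsf a}=-(d-1)(\mathsf a^2-\omega^2)$ gives a separable Riccati-type equation whose solution with the decaying branch is $\mathsf a(t)=\omega\coth((d-1)\,\omega\,(t+t_0))$; then $\mathsf b^2=\mathsf a^2-\omega^2=\omega^2\,\mathrm{csch}^2((d-1)\,\omega\,(t+t_0))$ gives $\mathsf b(t)=\pm\,\omega\,\mathrm{csch}((d-1)\,\omega\,(t+t_0))$, and one checks the sign is consistent with $\dot{\mathsf b}=-(d-1)\mathsf a\mathsf b$. The constants $\omega\ge0$ and $t_0\ge0$ are then the claimed integration constants, and positivity of $\rho$ holds since $\mathsf a>|\mathsf b|$ on $(-1,1)$ by $\mathsf a^2-\mathsf b^2=\omega^2>0$ (the degenerate case $\omega=0$ giving $\mathsf a=\mathsf b=1/((d-1)t)$ up to the sign, still with $g=\mathsf a(1+z)>0$ on $(-1,1)$).

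The only genuine subtlety — and the step I expect to require the most care — is the reverse direction and the edge cases: one must confirm that \emph{every} positive solution of~\eqref{FDE} of the given product form arises this way (so that the ODE reduction is an equivalence, not merely a sufficiency), which follows because the coefficient-matching is forced once $\rho$ has the stated form and $(\mathsf a,\mathsf b)$ is nowhere the trivial zero; and one must handle the limiting/boundary behaviour of the hyperbolic functions as $t\to\infty$ and as $\omega\to0$, as well as the sign ambiguity in $\mathsf b$, checking that the stated formulas exhaust the solution set and keep $\rho$ positive on $(-1,1)\times(0,\infty)$. The rest is the routine ODE integration sketched above; I would present the substitution, display the quadratic-in-$z$ identity, extract the three coefficient equations, exhibit the conserved quantity $\mathsf a^2-\mathsf b^2$, and integrate.
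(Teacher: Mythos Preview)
Your approach is essentially identical to the paper's: substitute the ansatz, reduce the PDE to the system $\dot{\mathsf a}=-(d-1)\,\mathsf b^2$, $\dot{\mathsf b}=-(d-1)\,\mathsf a\,\mathsf b$, identify the conserved quantity $\mathsf a^2-\mathsf b^2=\omega^2$, and integrate the resulting separable ODE for $\mathsf a$. One small slip: in your displayed polynomial identity the second term should read $-(d-1)\,\mathsf b\,z\,(\mathsf a+\mathsf b\,z)$ (you dropped a factor of $\mathsf b$), though your subsequent coefficient extraction and ODEs are correct, so this is purely typographical.
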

\begin{proof} Inserting the expression of $\rho$ in~\eqref{FDE}, we get that
\[
\mathsf a'+\mathsf b'\,z=-\,\mathsf b\,(d-1)\,(\mathsf b+\mathsf a\,z)
\]
for any $z\in(-1,1)$. Hence $(\mathsf a,\mathsf b)$ solve the system
\[
\mathsf a'=-\,(d-1)\,\mathsf b^2\,,\quad\mathsf b'=-\,(d-1)\,\mathsf a\,\mathsf b\,.
\]
{}From the positivity of $\rho$, we deduce that $\mathsf a>|\mathsf b|$ and deduce that
\[
\frac{\mathsf a'}{\mathsf b'}=\frac{\mathsf b}{\mathsf a}\,.
\]
There exists a positive constant $\omega$ such that
\[
\mathsf a=\sqrt{\omega^2+\mathsf b^2}
\]
and the problem is reduced to
\[
\mathsf a'=(d-1)\(\omega^2-\mathsf a^2\)\,.
\]
We conclude after integrating the ODE for $\mathsf a$ and using $\mathsf b=\pm\sqrt{\mathsf a^2-\omega^2}$.\end{proof}

As we shall see next, if $p=2^*$ and $d>3$, and~$\rho$ is a solution of~\eqref{FDE}, the only possible choice for $\beta$ compatible with Proposition~\ref{prop:SufficientConsidtion} is $\beta=\beta_\pm(2^*)=(d-2)/(d-3)$, and in this case
\be{alpha}
\frac d{dt}\mathsf F[\rho]=-\,2\,\beta^2\ix{\left|w''-\frac{d-1}{d-3}\,\frac{|w'|^2}w\right|^2\,\nu^2}\,,
\ee
with $\rho=u^p= w^{\beta p}$. When $u$ is given by~\eqref{ConfSolutions}, $w$ satisfies
\be{ODEw}
w''-\frac{d-1}{d-3}\,\frac{|w'|^2}w=0\,.
\ee
That is, for $\rho=u^p= w^{\beta p}$, solution of~\eqref{FDE}, and $u$ given by~\eqref{ConfSolutions} as initial datum, we obtain
\[
\frac d{dt}\mathsf F[\rho]_{|t=0}=0\,.
\]
If $\rho$ ($=u^p$) solves~\eqref{heat} instead of~\eqref{FDE}, we also find that $\frac d{dt}\mathsf F[\rho]_{|t=0}=0$, because $\rho$ is a minimizer of $\mathsf F[\rho]$ at $t=0$. However, it is simple to check that the family~\eqref{ConfSolutions} is not invariant under the action of~\eqref{heat}, as
\[
\frac{\partial\rho}{\partial t}=d\,(d-1)\,\mathsf b\,\frac{\mathsf b+\mathsf a\,z}{(\mathsf a+\mathsf b\,z)^{d+1}}
\]
clearly differs from
\[
\L\rho=d\,\mathsf b\,\,\frac{-\,\mathsf b\,z^2+d\,\mathsf a\,z+(d+1)\,\mathsf b}{(\mathsf a+\mathsf b\,z)^{d+2}}\,.
\]
We claim that \emph{any positive minimizer of $\mathsf F[\rho]$ is given by~\eqref{ConfSolutions} for some $\mathsf a$ and $\mathsf b$ such that $\mathsf a>|\mathsf b|$}. Indeed by~\eqref{alpha} and using the same notations as above, a minimizer solves~\eqref{ODEw}. This ODE can be solved using elementary methods and shows that $\rho(x)=(\mathsf a+\mathsf b\,z)^{-d}$.

Altogether, this proves that $\rho\mapsto\mathsf F[\rho]$ cannot evolve monotonously along the flow of~\eqref{heat}, and proves the result of~Proposition~\ref{Prop:Counter-Example} with $\rho_0=\rho(t_0,\cdot)$, for some $t_0>0$. This first obstruction is however not fully explicit.

\medskip A {\bf\emph{second obstruction}} arises from the fact that if $p\in(2^\#,2^*)$, one can find explicit functions such that $\frac d{dt}\mathsf F[\rho]_{|t=0}>0$, with $\rho$ solving~\eqref{heat}. We shall prove the following refined version of Proposition~\ref{Prop:Counter-Example}.
\begin{prop}\label{prop:obstruction2} Assume that $d\ge3$, $p\in(2^\#,2^*)$ and $\beta=\beta_-(p,d)$. There exists an explicit, non-constant, positive function $f$ and a positive constant $\mathsf A$ such that, if~$\rho$ solves~\eqref{heat} with initial datum $\rho(t=0,\cdot)=|f|^p$, then
\[
\frac d{dt}\mathsf F[\rho]_{|t=0}=\mathsf A\ix{\frac{|f'|^4}{f^2}\,\nu^2}\,.
\]
\end{prop}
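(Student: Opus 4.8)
The plan is to run the \emph{carr\'e du champ} computation for the heat flow~\eqref{heat} with $\rho=|f|^p$, but to choose $f$ so that the "bad" terms left over past the threshold $2^\#$ do not cancel, and moreover so that \emph{every} term except the $\ix{|f'|^4 f^{-2}\nu^2}$ term vanishes pointwise. Concretely, I would first recall (from the proofs in~\cite{MR808640,MR2381156,1302,DEKL2012}, which the excerpt allows me to invoke) the explicit form of $\frac d{dt}\mathsf F[\rho]$ along~\eqref{heat}: writing $\rho=u^p=w^{\beta p}$ with $\beta=\beta_-(p,d)$, one gets after the integrations by parts encoded in Lemma~\ref{lem:identies} an expression of the schematic form
\[
\frac d{dt}\mathsf F[\rho]_{|t=0}=-\,2\,\beta^2\ix{\Big|w''-\mathsf c_1\,\frac{|w'|^2}w\Big|^2\nu^2}+\mathsf A\ix{\frac{|w'|^4}{w^2}\,\nu^2}\,,
\]
where $\mathsf c_1=\mathsf c_1(p,d)$ and $\mathsf A=\mathsf A(p,d)$ are the coefficients produced by the two identities of Lemma~\ref{lem:identies} (this is exactly the structure displayed in~\eqref{alpha} for the critical case, now with a residual positive coefficient $\mathsf A$ because $p>2^\#$ forces $\beta=\beta_-$ to sit strictly inside — not on — the relevant parabola). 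The first step is therefore to track these coefficients carefully and check that $\mathsf A>0$ precisely when $p\in(2^\#,2^*)$; this sign is the whole point, and it should follow from the definition of $\beta_\pm$ as the roots of $\gamma(\beta)$ together with $\beta_-<1<\beta_+$ for such $p$.

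Next I would exploit the freedom in choosing $f$. Since the first term is a nonnegative integrand times $\nu^2$, I would kill it by solving the ODE $w''=\mathsf c_1\,|w'|^2/w$ on $(-1,1)$; this is elementary (it is~\eqref{ODEw} with $\mathsf c_1$ in place of $(d-1)/(d-3)$) and integrates to $w(z)=(\mathsf a+\mathsf b\,z)^{\kappa}$ for suitable $\kappa$ depending on $\mathsf c_1$, hence to $f=w^\beta=(\mathsf a+\mathsf b\,z)^{\kappa\beta}$, which is positive and smooth on $(-1,1)$ as soon as $\mathsf a>|\mathsf b|$ and non-constant as soon as $\mathsf b\neq0$. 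For such an $f$ the first integral vanishes \emph{identically}, and what remains is exactly $\mathsf A\ix{|f'|^4 f^{-2}\nu^2}$ after converting the $w$-integral back to an $f$-integral (the Jacobian of $w\mapsto w^\beta$ just rescales $\mathsf A$ by a positive constant, which I absorb). One must check that $\ix{|f'|^4 f^{-2}\,\nu^2}$ is finite for this explicit $f$: near $z=\pm1$ one has $f'\sim\nu^{-1}$-type blow-up at worst, but the weight $\nu^{\frac d2-1}$ in $d\nu_d$ together with the $\nu^2$ already present makes the integral converge for $d\ge3$; I would verify this exponent bookkeeping explicitly, since it is the only place where the dimensional restriction $d\ge3$ enters. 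Finally, since $\frac d{dt}\mathsf F[\rho]_{|t=0}=\mathsf A\ix{|f'|^4 f^{-2}\nu^2}>0$, this simultaneously gives the refined identity claimed and re-proves Proposition~\ref{Prop:Counter-Example} in the range $p\in(2^\#,2^*)$ with a fully explicit $\rho_0$.

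The main obstacle I anticipate is the bookkeeping in the first step: getting the coefficients $\mathsf c_1$ and $\mathsf A$ right requires combining the two identities of Lemma~\ref{lem:identies} with the specific choice $\beta=\beta_-(p,d)$ and the relation~\eqref{m} between $m$ and $\beta$, and then recognizing that the surviving quadratic form in $(w'',|w'|^2/w)$ is a perfect square plus a positive multiple of $(|w'|^2/w)^2$ exactly when $p>2^\#$. In particular I expect the cleanest route is to write the integrand of $\frac d{dt}\mathsf F$ as a symmetric quadratic form $Q$ in the two "variables" $X=w''\nu$ and $Y=(|w'|^2/w)\nu$, diagonalize $Q$, and identify $\mathsf c_1$ with the eigendirection corresponding to the (possibly) negative eigenvalue and $\mathsf A$ with the other eigenvalue; the choice $\beta=\beta_-$ is what forces $Q$ to be, up to the single positive leftover, negative semidefinite — which is why this boundary value of $\beta$, rather than an interior one, must be used to make the construction work. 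Everything downstream of that algebraic identity — solving the ODE, checking positivity, smoothness, and integrability — is routine.
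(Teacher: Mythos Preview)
Your overall strategy --- pick an explicit $f$ so that the quadratic form produced by the \emph{carr\'e du champ} collapses to a single signed term --- is indeed what the paper does. But the ``schematic form'' you write down for $\frac d{dt}\mathsf F[\rho]$ along the \emph{heat flow} is not correct, and the reasoning you give for $\mathsf A>0$ does not work.

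The identity you quote, namely $-\,2\,\beta^2\ix{|w''-\mathsf c_1\,|w'|^2/w|^2\,\nu^2}$ plus a remainder, is the \emph{nonlinear} flow identity: equation~\eqref{alpha} and its subcritical version in Proposition~\ref{propflowbetanot1} are derived for $\rho$ solving~\eqref{FDE}, not~\eqref{heat}. If you rewrite the heat-flow derivative in the variable $w=u^{1/\beta}$, a weight $w^{2(\beta-1)}$ appears in every term and cannot be dropped; the relation~\eqref{m} you invoke is likewise a nonlinear-flow relation and plays no role here since $m=1$. Moreover, $\beta_-(p,d)$ is by definition a \emph{root} of $\gamma$, so $\gamma(\beta_-)=0$ and there is no ``residual positive coefficient'' coming from the nonlinear identity; your claim that $\beta_-$ sits ``strictly inside the parabola'' is backwards.

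What the paper actually does is keep the heat-flow identity in the $u$ variable (Proposition~\ref{propflowbeta1}), namely the quadratic form
\[
\ix{|u''|^2\,\nu^2}-2\,\tfrac{d-1}{d+2}(p-1)\ix{u''\,\tfrac{|u'|^2}u\,\nu^2}+\tfrac d{d+2}(p-1)\ix{\tfrac{|u'|^4}{u^2}\,\nu^2}\,,
\]
and then chooses $u=w^\beta$ with $w''=\alpha\,|w'|^2/w$, $\alpha=\tfrac{d-1}{d+2}\,\beta\,(p-1)$. The point of this ODE is not to kill a square in a $w$-identity but simply to make $u''$ and $|u'|^2/u$ \emph{proportional} to the same function $w^{\beta-1}|w'|^2/w$; the whole quadratic form then collapses to a scalar $-\mathsf A$ times $\ix{|u'|^4/u^2\,\nu^2}$, with $-\mathsf A=(\alpha+\beta-1)^2-2\,\tfrac{d-1}{d+2}(p-1)(\alpha+\beta-1)\,\beta+\tfrac d{d+2}(p-1)\,\beta^2$. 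The positivity of $\mathsf A$ for $p\in(2^\#,2^*)$ and $\beta=\beta_-$ is not an immediate consequence of the definition of $\beta_\pm$; it is a separate algebraic fact (Lemma~\ref{Lem:algebraic}) proved by comparing $\beta_-$ with the roots $B_\pm$ of the equation $\mathsf A=0$. Your sketch is missing both this correct reduction and the genuine work of establishing $\mathsf A>0$.
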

Before proving this proposition, let us recall some known results for the heat flow and for the fast diffusion equation.

\medskip\noindent$\bullet$ \emph{The heat flow approach.} Assume that $p\neq2$. If $\rho=|u|^p$ solves~\eqref{heat}, then $u$ is a solution of
\be{Eqn:Linear}
\frac{\partial u}{\partial t}=\L u+(p-1)\,\frac{|u'|^2}u
\ee
with initial datum $f$ and we notice that
\[
\frac d{dt}\ix{|u|^p}=0\,,
\]
so that $\overline u^p:=\ix{|u|^p}$ is preserved. A straightforward computation (using the definition of $\mathcal L$ and Lemma~\ref{lem:identies}) shows that
\begin{multline*}
-\,\frac12\,\frac d{dt}\ix{\(|u'|^2\,\nu+\frac d{p-2}\,\big(|u|^2-\overline u^2\big)\)}\\
=\ix{|u''|^2\,\nu^2}-2\,\frac{d-1}{d+2}\,(p-1)\ix{u''\,\frac{|u'|^2}u\,\nu^2}\\
+\frac d{d+2}\,(p-1)\ix{\frac{|u'|^4}{u^2}\,\nu^2}\,.
\end{multline*}
The r.h.s.~is positive if
\[
\gamma_1=\frac d{d+2}\,(p-1)-\(\frac{d-1}{d+2}\,(p-1)\)^2\ge0\,,
\]
that is, if $p\le2^\#$ when $d>1$, or $p>1$ when $d=1$. Altogether we have the identity
\begin{multline*}
-\,\frac12\,\frac d{dt}\ix{\(|u'|^2\,\nu+\frac d{p-2}\,\big(|u|^2-\overline u^2\big)\)}\\
=\ix{\left|\,u''-\,\frac{d-1}{d+2}\,(p-1)\,\frac{|u'|^2}u\,\right|^2\nu^2}+\,\gamma_1\ix{\frac{|u'|^4}{u^2}\,\nu^2}\,.
\end{multline*}
Hence we have proved the following result.
\begin{prop}\label{propflowbeta1}
For all $p\in[1,2)\cup(2,2^\#]$ if $d>1$, and for all $p\in[1,2)\cup(+\infty)$ if $d=1$, there exists a constant $\gamma_1\ge0$, such that if $u$ is a positive solution to~\eqref{Eqn:Linear}, then
\[
\frac d{dt}\ix{u^{p}}=0\,,
\]
\[
\frac d{dt}\ix{\(|u'|^2\,\nu+\frac d{p-2}\,\(u^2-\overline u^2\)\)}\le-\,2\,\gamma_1\ix{\frac{|u'|^4}{u^2}\,\nu^2}\,,
\]
where $\gamma_1$ is given by
\[\label{Eqn:gamma1}
\gamma_1=\(\frac{d-1}{d+2}\)^2\,(p-1)\,(2^\#-p)\quad\mbox{if}\quad d>1\,,\quad\gamma_1=\frac{p-1}3\quad\mbox{if}\quad d=1\,.
\]
\end{prop}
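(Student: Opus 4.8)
The statement is exactly the outcome of the computation displayed in the two paragraphs immediately preceding it, so the plan is simply to organize that computation. The starting point is the change of unknown $\rho=u^p$ (with $u>0$): inserting this into the heat flow~\eqref{heat} and using $\L f=\nu\,f''+\tfrac d2\,\nu'\,f'$ together with the chain rule shows that $u$ solves~\eqref{Eqn:Linear}. Mass conservation is then immediate, since $\L$ is in divergence form and $d\nu_d$ is a probability measure: $\frac d{dt}\ix{u^p}=\ix{\L\rho}=\scal1{\L\rho}=0$, so $\overline u:=\big(\ix{u^p}\big)^{1/p}$, and in particular $\overline u^2$, is constant along the flow. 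This gives the first identity of the statement and justifies treating $\overline u^2$ as a constant in the functional.

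Next I would differentiate $J[u]:=\ix{\big(|u'|^2\,\nu+\tfrac d{p-2}\,(u^2-\overline u^2)\big)}$ in $t$. Using the self-adjointness relation $\ix{u'\,g'\,\nu}=-\scal g{\L u}$ with $g=\partial_t u$, one gets $\tfrac12\,\tfrac d{dt}J[u]=-\scal{\partial_t u}{\L u}+\tfrac d{p-2}\ix{u\,\partial_t u}$. Substituting $\partial_t u$ from~\eqref{Eqn:Linear} splits this into $\ix{(\L u)^2}$, the cross term $\scal{\tfrac{|u'|^2}u}{\L u}$, the term $\ix{u\,\L u}=-\ix{|u'|^2\,\nu}$, and a term proportional to $\ix{|u'|^2\,\nu}$ coming from the nonlinearity. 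Now invoke Lemma~\ref{lem:identies}: the first identity turns $\ix{(\L u)^2}$ into $\ix{|u''|^2\,\nu^2}+d\ix{|u'|^2\,\nu}$, and the second identity expresses the cross term through $\ix{\tfrac{|u'|^4}{u^2}\,\nu^2}$ and $\ix{\tfrac{|u'|^2\,u''}u\,\nu^2}$. The role of the coefficient $\tfrac d{p-2}$ in $J$ is precisely that the ``lower order'' contributions — the $d\ix{|u'|^2\,\nu}$ from the first identity, the $-\ix{|u'|^2\,\nu}$ from $\ix{u\,\L u}$, and the nonlinear term — cancel exactly, leaving
\[
-\tfrac12\,\tfrac d{dt}J[u]=\ix{|u''|^2\,\nu^2}-2\,\tfrac{d-1}{d+2}\,(p-1)\ix{u''\,\tfrac{|u'|^2}u\,\nu^2}+\tfrac d{d+2}\,(p-1)\ix{\tfrac{|u'|^4}{u^2}\,\nu^2},
\]
which is the displayed identity preceding the statement.

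Finally I would complete the square, writing the first two terms of the integrand as $\big|u''-\tfrac{d-1}{d+2}(p-1)\tfrac{|u'|^2}u\big|^2\nu^2-\big(\tfrac{d-1}{d+2}(p-1)\big)^2\tfrac{|u'|^4}{u^2}\nu^2$, so that
\[
-\tfrac12\,\tfrac d{dt}J[u]=\ix{\Big|u''-\tfrac{d-1}{d+2}(p-1)\tfrac{|u'|^2}u\Big|^2\nu^2}+\gamma_1\ix{\tfrac{|u'|^4}{u^2}\,\nu^2},
\]
with $\gamma_1=\tfrac d{d+2}(p-1)-\big(\tfrac{d-1}{d+2}(p-1)\big)^2=\tfrac{p-1}{(d+2)^2}\big[d(d+2)-(d-1)^2(p-1)\big]$. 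Since $d(d+2)-(d-1)^2(p-1)=(d-1)^2(2^\#-p)$ for $d>1$, this bracket is nonnegative exactly when $p\le2^\#$, and then $\gamma_1=(p-1)\big(\tfrac{d-1}{d+2}\big)^2(2^\#-p)$; for $d=1$ the bracket equals $3(p-1)>0$ for every $p>1$, so $\gamma_1=\tfrac{p-1}3$. Dropping the nonnegative square integral yields $\tfrac d{dt}J[u]\le-2\,\gamma_1\ix{\tfrac{|u'|^4}{u^2}\,\nu^2}$, which is the second assertion. (The value $p=2$ is excluded throughout because there $J$ changes form; all differentiations under the integral and integrations by parts are legitimate for positive solutions by standard parabolic regularity, so this is a valid a priori estimate.)

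The only genuinely delicate point is the bookkeeping leading to the boxed identity: after substituting~\eqref{Eqn:Linear} one must integrate by parts a few times and use \emph{both} identities of Lemma~\ref{lem:identies} to funnel every term into the three ``canonical'' integrals $\ix{|u''|^2\,\nu^2}$, $\ix{u''\,\tfrac{|u'|^2}u\,\nu^2}$, $\ix{\tfrac{|u'|^4}{u^2}\,\nu^2}$, and to check that all terms proportional to $\ix{|u'|^2\,\nu}$ cancel — it is exactly this cancellation that forces the choice of the constant $\tfrac d{p-2}$ in $J$. Everything else (the chain rule, mass conservation, the square completion, and recognizing $\gamma_1$ as a multiple of $(p-1)(2^\#-p)$) is routine algebra.
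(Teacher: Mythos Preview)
Your proposal is correct and follows exactly the paper's approach: the proposition is the outcome of the computation displayed just before it, obtained by differentiating the functional along~\eqref{Eqn:Linear}, applying the two identities of Lemma~\ref{lem:identies} so that the $\ix{|u'|^2\,\nu}$ contributions cancel, and then completing the square to isolate $\gamma_1$. One tiny slip: for $d=1$ the bracket $d(d+2)-(d-1)^2(p-1)$ equals $3$, not $3(p-1)$, but your conclusion $\gamma_1=(p-1)/3$ is of course unaffected.
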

This result can be found in~\cite{MR808640}.

\medskip\noindent$\bullet$ \emph{The nonlinear diffusion approach.} Now let us turn our attention towards the nonlinear flow defined by~\eqref{FDE} with $m$ and $\beta$ related by~\eqref{m}, $\kappa=\beta\,(p-2)+1$, and
\[
\rho(t,x)=w^{\beta p}\(\frac{\kappa\,t}{\beta\,p},x\)\,,\quad(t,x)\in\R^+\times[-1,1]\,.
\]
Then the function $w$ satisfies
\be{Eqn:Nonlinear}
\frac{\partial w}{\partial t}=w^{2-2\beta}\(\L w+\kappa\,\frac{|w'|^2}w\)
\ee
and notice that
\[
\frac d{dt}\ix{w^{\beta p}}=\beta\,p\,(\kappa-\beta\,(p-2)-1)\ix{w^{\beta(p-2)}\,|w'|^2\,\nu}\,,
\]
so that $\overline w^{\beta p}=\ix{w^{\beta p}}$ is preserved if $\kappa=\beta\,(p-2)+1$. Recall that~\eqref{Eqn:Nonlinear} is such that $\rho(t,x)=|u(t,x)|^p=|w(t,x)|^{\beta p}$ obeys to the nonlinear flow~\eqref{FDE}. Similarly as in the linear case we calculate:
\begin{multline*}
-\frac1{2\,\beta^2}\,\frac d{dt}\ix{\(\big|(w^\beta)'\big|^2\,\nu+\frac d{p-2}\,\(w^{2\beta}-\overline w^{2\beta}\)\)}\\
=\ix{|w''|^2\,\nu^2}-2\,\frac{d-1}{d+2}\,(\kappa+\beta-1)\ix{w''\,\frac{|w'|^2}w\,\nu^2}\\
+\left[\kappa\,(\beta-1)+\,\frac d{d+2}\,(\kappa+\beta-1)\right]\ix{\frac{|w'|^4}{w^2}\,\nu^2}\,.
\end{multline*}
The r.h.s.~is nonnegative if there exists a $\beta\in\R$ such that
\begin{multline*}
\gamma(\beta):=\kappa\,(\beta-1)+\,\frac d{d+2}\,(\kappa+\beta-1)-\(\frac{d-1}{d+2}\,(\kappa+\beta-1)\)^2\\
=\big(1+\beta\,(p-2)\big)\,(\beta-1)+\frac d{d+2}\,\beta\,(p-1)-\(\frac{d-1}{d+2}\,\beta\,(p-1)\)^2\ge0\,.
\end{multline*}
With the choice of $\beta$ as in Proposition~\ref{prop:SufficientConsidtion},~$\gamma$ is nonnegative. Indeed we have
\[\label{Eqn:gamma}
\gamma(\beta)=-\,1+2\,\mathsf b\,\beta-\,\mathsf a\,\beta^2
\]
with $\mathsf a=\frac{(d-1)^2\,p^2-3\,(d^2+2)\,p+3\,(d^2+2\,d+3)}{(d+2)^2}$ and $\mathsf b=\frac{d+3-p}{d+2}$ and the reduced discriminant
\[
\mathsf b^2-\mathsf a=\frac{4\,d}{(d+2)^2}\,(p-1)\,\big(2\,d-\,p\,(d-2)\big)
\]
takes nonnegative values when $d\ge3$ if $1\le p\le2^*$. The equation $\gamma(\beta)=0$ has at most two solutions $\beta=\beta_\pm(p,d)$, which are the two roots of the polynomial $\beta\mapsto\gamma(\beta)$ given in Section~\ref{Sec:ultraspherical}. Notice here that when $p=2^*$ and $d\ge4$, $\gamma(\beta)=0$ has a single root $\beta=\beta_\pm(2^*,d)=(d-2)/(d-3)$ and that~\eqref{alpha} follows from our computations. The case $d=3$ and $p=6$ is a limit case in~\eqref{Eqn:Nonlinear} corresponding to $\beta\to+\infty$ and can be dealt with directly using~\eqref{FDE}. In dimension $d=2$ and $1$, the discriminant is respectively $2\,(p-1)$ and $\frac49\,(p-1)(p+2)$ and takes nonnegative values for any $p\ge1$. Altogether we obtain the identity
\begin{multline*}
-\frac1{2\,\beta^2}\,\frac d{dt}\ix{\(\big|(w^\beta)'\big|^2\,\nu+\frac d{p-2}\,\(w^{2\beta}-\overline w^{2\beta}\)\)}\\
=\ix{\left|\,w''-\,\frac{d-1}{d+2}\,(\kappa+\beta-1)\,\frac{|w'|^2}w\,\right|^2\nu^2}+\gamma(\beta)\ix{\frac{|w'|^4}{w^2}\,\nu^2}\,.
\end{multline*}
Notice that $\gamma(1)=\gamma_1$, so that the above identity generalizes the computation done for the heat flow. We have proved the following result.
\begin{prop}\label{propflowbetanot1} For all $p\in[1, 2)\cup (2,2^*]$ if $d\ge4$, for all $p\in[1, 2)\cup (2,2^*)$ if $d=3$, and for all $p\in[1,2)\cup(+\infty)$ if $d=1$ or $d=2$, there exist two constants, $\beta\in\R$ and $\gamma>0$, such that if $w$ is a solution to~\eqref{Eqn:Nonlinear}, then
\[
\frac d{dt}\ix{w^{\beta p}}=0
\]
and
\begin{multline*}
\frac d{dt}\ix{\(\big|(w^\beta)'\big|^2\,\nu+\frac d{p-2}\,\(w^{2\beta}-\overline w^{2\beta}\)\)}\\
\le-\,2\,\beta^2\,\gamma(\beta)\ix{\frac{|w'|^4}{w^2}\,\nu^2}\,.
\end{multline*}
\end{prop}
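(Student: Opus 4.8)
The proposition repackages the computation displayed just above the statement, so the plan is to organize that computation into four steps: record the correspondence between \eqref{Eqn:Nonlinear} and \eqref{FDE}, verify the conservation law, establish the entropy--production identity, and choose $\beta$. With $\kappa=\beta\,(p-2)+1$ and $m$ given by \eqref{m}, a direct substitution shows that if $w$ solves \eqref{Eqn:Nonlinear} then $\rho(t,x)=w^{\beta p}(\kappa t/(\beta p),x)$ solves \eqref{FDE}, so it suffices to prove the two displayed formulas for $w$. Differentiating $\ix{w^{\beta p}}$ along \eqref{Eqn:Nonlinear} and integrating by parts by means of the adjointness identity $\scal{f_1}{\L f_2}=-\ix{f_1'\,f_2'\,\nu}$ leaves a single term proportional to $\kappa-\beta\,(p-2)-1$, which vanishes by the choice of $\kappa$; hence $\overline w^{\beta p}=\ix{w^{\beta p}}$ is constant along the flow.

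The core step is the \emph{entropy--production identity}. I would differentiate $\mathcal G[w]:=\ix{\big(\big|(w^\beta)'\big|^2\,\nu+\frac d{p-2}\,(w^{2\beta}-\overline w^{2\beta})\big)}$ along \eqref{Eqn:Nonlinear}; after integrating by parts, the derivative becomes a linear combination of $\ix{(\L w)^2}$, $\scal{\frac{|w'|^2}w\,\nu}{\L w}$, $\ix{w''\,\frac{|w'|^2}w\,\nu^2}$ and $\ix{\frac{|w'|^4}{w^2}\,\nu^2}$. Applying the two identities of Lemma~\ref{lem:identies} eliminates the first two of these in favor of $\ix{|w''|^2\,\nu^2}$, $\ix{w''\,\frac{|w'|^2}w\,\nu^2}$ and $\ix{\frac{|w'|^4}{w^2}\,\nu^2}$, all boundary terms vanishing because the weights $\nu$ and $\nu^2$ degenerate at $z=\pm1$. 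Completing the square in $w''$ in the resulting quadratic form then gives
\begin{multline*}
-\frac1{2\,\beta^2}\,\frac d{dt}\mathcal G[w]=\ix{\left|\,w''-\,\frac{d-1}{d+2}\,(\kappa+\beta-1)\,\frac{|w'|^2}w\,\right|^2\nu^2}\\
+\gamma(\beta)\ix{\frac{|w'|^4}{w^2}\,\nu^2}\,,
\end{multline*}
with $\gamma(\beta)=-1+2\,\mathsf b\,\beta-\mathsf a\,\beta^2$ as displayed before the statement; since the first integral is nonnegative, the claimed bound follows as soon as $\gamma(\beta)\ge0$.

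It then remains to choose $\beta$. The reduced discriminant of $\gamma$ equals $\frac{4d}{(d+2)^2}\,(p-1)\,(2d-p\,(d-2))$ when $d\ge3$, and $\frac49\,(p-1)(p+2)$ when $d=1$, $2\,(p-1)$ when $d=2$; it is nonnegative on the stated range of $p$, and strictly positive there except at $p=1$ and, when $d\ge4$, at $p=2^*$. Hence one takes $\beta$ strictly between the two roots $\beta_\pm(p,d)$ when the discriminant is positive, so that $\gamma(\beta)>0$, and equal to their common value at the exceptional parameters, where $\gamma(\beta)=0$ --- this recovers \eqref{alpha} at $p=2^*$. The remaining borderline case $d=3$, $p=6$ (formally $\beta\to+\infty$) is handled directly from \eqref{FDE} with $m=2/3$. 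Taking $\beta=1$, which is admissible precisely when $p\le2^\#$ and for which $\gamma(1)=\gamma_1$, recovers Proposition~\ref{propflowbeta1}.

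The only real obstacle is the bookkeeping in the second step: keeping track of the several integrations by parts, checking that every boundary contribution vanishes for positive smooth $w$, and correctly reducing $\frac d{dt}\mathcal G[w]$ to the three-term quadratic form before completing the square. Everything else --- the correspondence with \eqref{FDE}, the conservation law, and the sign analysis of the quadratic polynomial $\gamma$ --- is elementary.
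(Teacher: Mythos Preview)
Your proposal is correct and follows exactly the paper's own argument: the proposition is simply a restatement of the computation displayed immediately before it, and your four steps (correspondence with \eqref{FDE}, conservation of $\ix{w^{\beta p}}$ via $\kappa=\beta(p-2)+1$, the entropy--production identity obtained from Lemma~\ref{lem:identies} followed by completion of the square, and the discriminant analysis of $\gamma(\beta)$) reproduce that computation in the same order and with the same ingredients. One small caution: when you say ``take $\beta$ strictly between the two roots $\beta_\pm(p,d)$'', this is only the right picture when the leading coefficient $\mathsf a=\delta(p,d)/(d+2)^2$ is positive; when $\delta(p,d)<0$ (which occurs for $d=2,3$ in part of the range) the parabola opens upward and the admissible $\beta$ lie outside the roots, as in Proposition~\ref{prop:SufficientConsidtion} --- but either way a $\beta$ with $\gamma(\beta)>0$ exists, so your conclusion stands.
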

This result can be found in~\cite{DEKL2014}.
\begin{figure}[ht]
\includegraphics{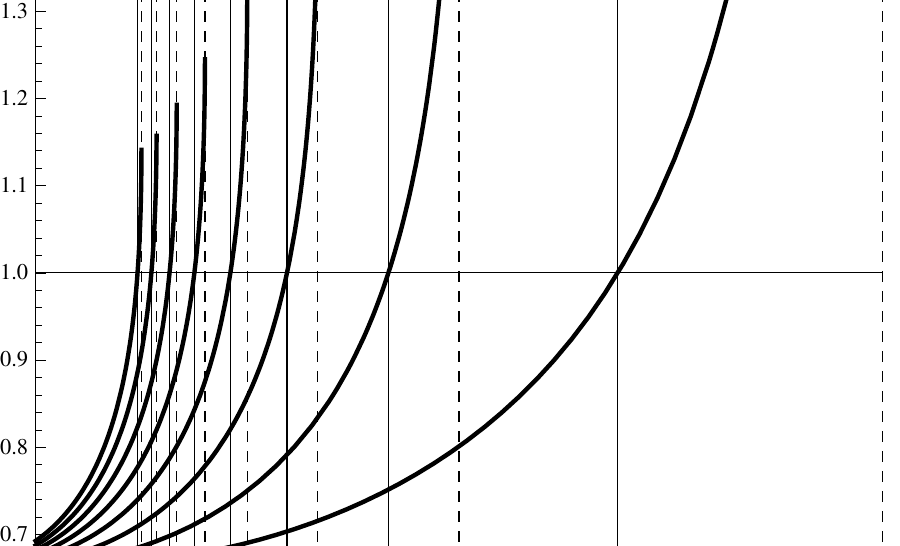}\caption{\label{F3pdf}\small\sl The function $p\mapsto\beta_-(p,d)$ for various values of $d=3$, $4$,... $10$. The straight horizontal line corresponds to $\beta=1$. The plain straight vertical lines correspond to $p=2^\#$ and the dashed straight vertical lines correspond to $p=2^*$.}
\end{figure}

\medskip\noindent$\bullet$ \emph{Proof of Proposition~\ref{prop:obstruction2}.} We are now in a position to build our counter-example, which is the second obstruction we search for.

With $\alpha=\frac{d-1}{d+2}\,\beta\,(p-1)$, $\beta=\beta_-(p,d)$ and $w$ such that
\[
w(z)= (\mathsf a+\mathsf b\,z)^\frac1{1-\alpha}
\]
for some positive constants $\mathsf a$ and $\mathsf b$, we observe that
\be{Eqn:w}
w''=\alpha\,\frac{|w'|^2}w\,.
\ee
Next we consider $u=w^\beta$ and compute
\begin{eqnarray*}
&&\frac1\beta\,w^{1-\beta}\,u''=w''+(\beta-1)\,\frac{|w'|^2}w=(\alpha+\beta-1)\,\frac{|w'|^2}w\,,\\
&&\frac1\beta\,w^{1-\beta}\,\frac{|u'|^2}u=\beta\,\frac{|w'|^2}w\,.
\end{eqnarray*}
If we take this function $w$ as initial datum and consider the flow defined by~\eqref{heat} and~\eqref{Eqn:Linear}, then with $\rho(t,x)=|w(t,x)|^{\beta p}$ we find that
\begin{multline*}
-\frac d{dt}\mathsf F[\rho]_{|t=0}=-\,\mathrm D\mathsf F[\rho]\cdot\L\rho\\
\hspace*{3cm}=\ix{|u''|^2\,\nu^2}-2\,\frac{d-1}{d+2}\,(p-1)\ix{u''\,\frac{|u'|^2}u\,\nu^2}\\
\hspace*{3cm}+\frac d{d+2}\,(p-1)\ix{\frac{|u'|^4}{u^2}\,\nu^2}\\
=-\,\mathsf A\ix{\frac{|u'|^4}{u^2}\,\nu^2}
\end{multline*}
where
\[
-\,\mathsf A=(\alpha+\beta-1)^2-2\,\frac{d-1}{d+2}\,(p-1)\,(\alpha+\beta-1)\,\beta+\frac d{d+2}\,(p-1)\,\beta^2\,.
\]
After eliminating $\alpha$ and $\beta$, we can observe that $p\mapsto \mathsf A(p)$ is positive. An algebraic proof is given below, in Lemma~\ref{Lem:algebraic}. This concludes the proof of Proposition~\ref{prop:obstruction2}.\qed

\begin{figure}[ht]
\includegraphics[width=5.5cm]{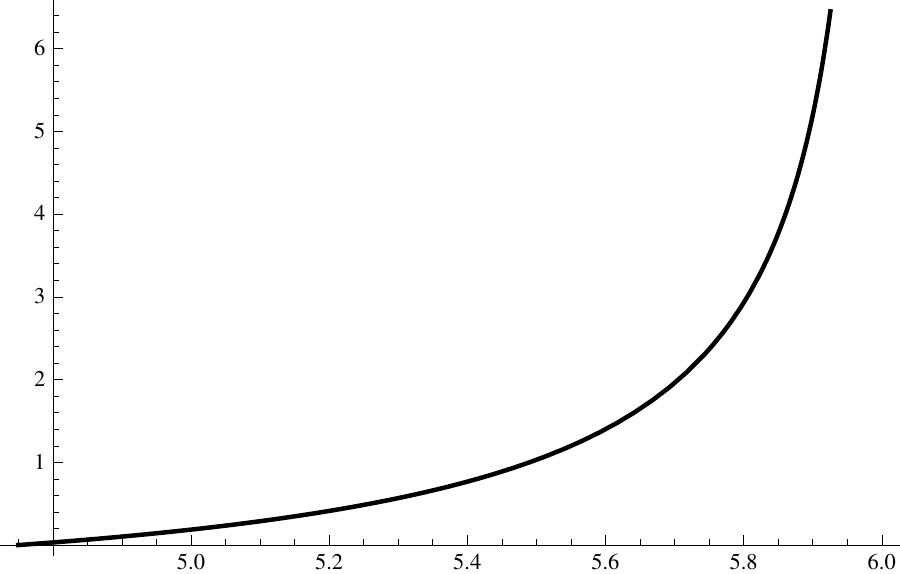}\hspace*{9pt}\includegraphics[width=5.5cm]{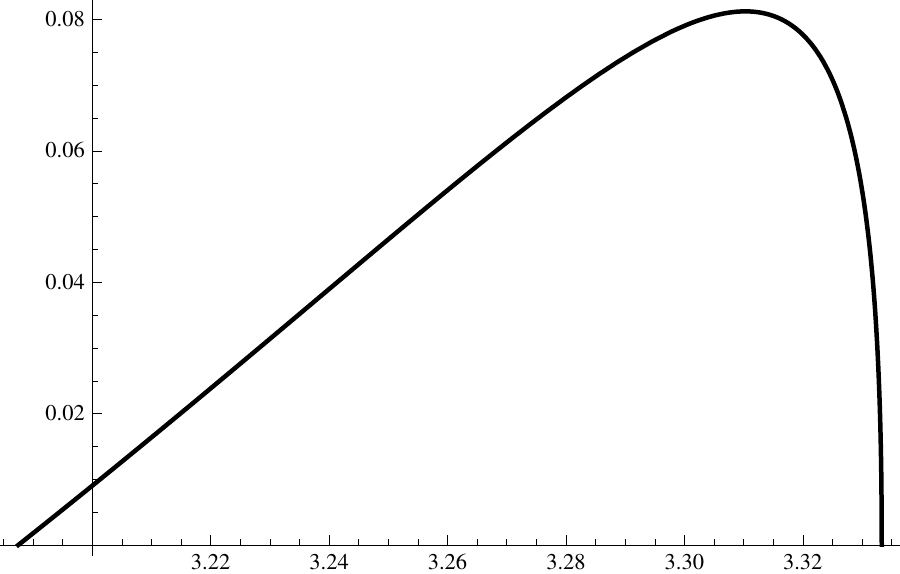}
\caption{\label{F4}\small\sl The function $p\mapsto\mathsf A(p)$ for various values of $d=3$ (left) and $d=5$ (right). The patterns are similar for all $d\ge4$. Here $p$ is in the range $2^\sharp<p<2^*$.}
\end{figure}

\begin{lemma}\label{Lem:algebraic} Assume that $d\ge3$. With the above notations, $\mathsf A$ is positive when $p\in(2^\sharp,2^*)$ and $\beta=\beta_-(p,d)$.\end{lemma}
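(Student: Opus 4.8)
The plan is to show that the quadratic-in-$\beta$ expression
\[
-\,\mathsf A=(\alpha+\beta-1)^2-2\,\tfrac{d-1}{d+2}\,(p-1)\,(\alpha+\beta-1)\,\beta+\tfrac d{d+2}\,(p-1)\,\beta^2
\]
is negative, where $\alpha=\frac{d-1}{d+2}\,\beta\,(p-1)$ and $\beta=\beta_-(p,d)$ is a root of $\gamma$. First I would substitute $\alpha=\frac{d-1}{d+2}\,\beta\,(p-1)$ throughout, so that $-\mathsf A$ becomes a polynomial in $\beta$ and $p$ alone; writing $\kappa=\beta\,(p-2)+1$, note that $\alpha+\beta-1=\frac{d-1}{d+2}\,(\kappa+\beta-1)$ (this is exactly the combination appearing in Proposition~\ref{propflowbetanot1}). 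This already suggests comparing $-\mathsf A$ with $-\gamma(\beta)$, which vanishes by the choice $\beta=\beta_-(p,d)$.

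The key step is to produce an exact algebraic identity of the form $-\,\mathsf A=c_1(p,d)\,\gamma(\beta)+c_2(p,d)\,\big((\beta-\beta_1)(\beta-\beta_2)\text{-type remainder}\big)$ — more concretely, since both $-\mathsf A$ and $\gamma$ are quadratics in $\beta$, I expect a relation
\[
-\,\mathsf A=\gamma(\beta)+\big(\text{a linear-in-}\beta\text{ correction}\big)\cdot\beta
\]
or, better, $-\mathsf A - \gamma(\beta) = R(p,d)\,\beta^2 + S(p,d)\,\beta$ for explicit rational functions $R,S$. Comparing the two displayed expansions of $\gamma$ (in terms of $\kappa$) and of $-\mathsf A$, the $\ix{\frac{|w'|^4}{w^2}\nu^2}$-coefficients differ only in the $\kappa(\beta-1)$ versus $0$ term, so I anticipate $-\mathsf A=\gamma(\beta)-\kappa\,(\beta-1)=\gamma(\beta)-\big(1+\beta\,(p-2)\big)(\beta-1)$. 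Since $\gamma(\beta_-)=0$, this reduces the claim to showing $\big(1+\beta_-(p-2)\big)\big(\beta_-(p,d)-1\big)<0$ on $p\in(2^\sharp,2^*)$, i.e.\ that $\beta_-(p,d)$ lies strictly between $\frac{1}{2-p}<0$ and $1$ — which, using $\kappa=\beta(p-2)+1>0$ (needed for mass conservation, hence $\beta_->\frac1{2-p}$ automatically when $p>2$), comes down to the single inequality $\beta_-(p,d)<1$ for $p>2^\sharp$. The latter is precisely the statement that the curve $p\mapsto\beta_-(p,d)$ crosses the horizontal line $\beta=1$ at $p=2^\sharp$ (noted in the caption of Fig.~\ref{F1} and Fig.~\ref{F3pdf}) and stays below it thereafter; I would verify $\gamma(1)=\gamma_1=\big(\tfrac{d-1}{d+2}\big)^2(p-1)(2^\sharp-p)$, which is negative for $p>2^\sharp$, while $\gamma$ opens downward ($\mathsf a>0$ in the relevant range by the discriminant computation), so $1$ lies outside the interval $[\beta_-,\beta_+]$ on the correct side, giving $\beta_-(p,d)<1<\beta_+(p,d)$ for $2^\sharp<p<2^*$.

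The main obstacle I expect is purely bookkeeping: confirming the exact form of the identity $-\mathsf A=\gamma(\beta)-\kappa(\beta-1)$ (or whatever the correct closed form turns out to be) requires carefully matching the two multi-line computations for the linear flow (Proposition~\ref{propflowbeta1}) and the nonlinear flow (Proposition~\ref{propflowbetanot1}) and tracking the substitution $u=w^\beta$ through the coefficients $\alpha+\beta-1$, $\kappa+\beta-1$, and the $\frac d{d+2}(p-1)\beta^2$ term. Once that identity is in hand, the sign analysis is immediate from the already-established facts that $\gamma(\beta_-)=0$, that $\gamma$ is a downward parabola on $2^\sharp<p<2^*$ (its leading coefficient $-\mathsf a<0$), and that $\gamma(1)<0$ for $p>2^\sharp$, so that $\beta_-(p,d)<1$ and hence $\kappa(\beta_--1)<0$, i.e.\ $-\mathsf A<0$, i.e.\ $\mathsf A>0$. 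A secondary technical point is the limiting case $d=3$, $p\to2^*=6$, where $\beta_-\to+\infty$; there one works directly with $m=2/3$ in \eqref{FDE} as indicated earlier, and the sign of $\mathsf A$ is obtained by the corresponding limit of the above expression.
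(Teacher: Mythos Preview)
Your overall strategy --- express $-\mathsf A$ in terms of $\gamma(\beta)$, use $\gamma(\beta_-)=0$, and reduce to a sign check --- is reasonable, but the execution contains two genuine errors, not mere bookkeeping.

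First, the intermediate identity you write, ``$\alpha+\beta-1=\frac{d-1}{d+2}(\kappa+\beta-1)$'', is false: since $\kappa+\beta-1=\beta(p-1)$, the right-hand side equals $\alpha$, not $\alpha+\beta-1$. As a consequence the anticipated relation $-\mathsf A=\gamma(\beta)-\kappa(\beta-1)$ is wrong. A direct computation (using $-\mathsf A=(\beta-1)^2-\alpha^2+\tfrac d{d+2}(p-1)\beta^2$ and $\gamma(\beta)=\kappa(\beta-1)-\alpha^2+\tfrac d{d+2}(p-1)\beta$) gives instead
\[
-\mathsf A=\gamma(\beta)+2(\beta-1)\big(\mathsf b\,\beta-1\big),\qquad \mathsf b=\tfrac{d+3-p}{d+2}\,,
\]
so at $\beta=\beta_-$ one has $-\mathsf A=2(\beta_--1)(\mathsf b\,\beta_--1)$, not $-\kappa(\beta_--1)$.

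Second, and more seriously, your sign argument is self-contradictory. You write that $\gamma(1)<0$ for $p>2^\sharp$ and that $\gamma$ opens downward, hence ``$1$ lies outside the interval $[\beta_-,\beta_+]$ on the correct side, giving $\beta_-(p,d)<1<\beta_+(p,d)$''. But $\beta_-<1<\beta_+$ means $1$ lies \emph{inside} $[\beta_-,\beta_+]$, which would force $\gamma(1)\ge0$. In fact the correct conclusion is the opposite: for $d\ge4$ one has $\beta_-(p,d)>1$ on $(2^\sharp,2^*)$ (e.g.\ $d=5$, $p=3.25$ gives $\beta_-\approx1.05$). So $\kappa(\beta_--1)>0$ and your intended inequality $-\mathsf A=-\kappa(\beta_--1)<0$ would have gone the wrong way even had the identity been right. (Separately, the claim ``$\mathsf a>0$ in the relevant range by the discriminant computation'' confuses the discriminant $\mathsf b^2-\mathsf a$ with the leading coefficient; for $d=3$ one has $\mathsf a<0$ on $(2^\sharp,2^*)$, so $\gamma$ opens upward there.)

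The paper proceeds differently: it writes $\mathsf A$ explicitly as a quadratic in $\beta$, finds its two roots $B_\pm(p,d)=\frac{d+2}{d+2\pm(d-1)\sqrt{(p-1)(p-2^\sharp)}}$, and shows by a direct elementary estimate that $1/B_-<1/\beta_-<1/B_+$, hence $B_+<\beta_-<B_-$ and $\mathsf A>0$. Your route can be repaired (with the correct identity above, one checks $\mathsf b\beta_--1=\tfrac12(\mathsf a\beta_-^2-1)=\tfrac12\mathsf a\beta_-(\beta_--\beta_+)$ via Vieta, and then handles the sign of each factor), but it is not ``purely bookkeeping'': the case split on the sign of $\mathsf a$ (in particular $d=3$) and the correct placement of $\beta_-$ relative to $1$ both require real work.
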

\begin{proof} With $\alpha=\frac{d-1}{d+2}\,\beta\,(p-1)$, we get that
\[
\mathsf A=\frac{(d-1)^2\,p^2-\,(3\,d^2-\,2\,d+2)\,p+d^2-\,4\,d-\,3}{(d+2)^2}\,\beta^2+\,2\,\beta-1
\]
and the equation $\mathsf A=0$ has at most two solutions $\beta=B_\pm(p,d)$ with
\[
B_\pm(p,d):=\frac{d+2}{d+2\pm\,(d-1)\sqrt{(p-1)\,(p-2^\#)}}\,.
\]
Elementary computations show that $\frac1{B_-}<\frac1{\beta_-}<\frac1{B_+}$ if $p\in(2^\#,2^*)$. Indeed, it is elementary to check that
\[
\pm\,\frac{d+2}{\sqrt{p-1}}\kern -1pt\(\frac1{B_\pm}-\frac1{\beta_-}\)\kern -1pt=\pm\sqrt{p-1}\mp\sqrt{d\,(d-2)}\,\sqrt{2^*-p}+\,(d-1)\,\sqrt{p-2^\#}
\]
is positive if $p\in(2^\#,2^*)$. Moreover, we have that $\mathsf A$ is positive for any $p\in(2^\#,2^*)$ if $B_+<\beta<B_-$, which concludes the proof.\end{proof}

\begin{figure}[ht]
\includegraphics[width=6cm]{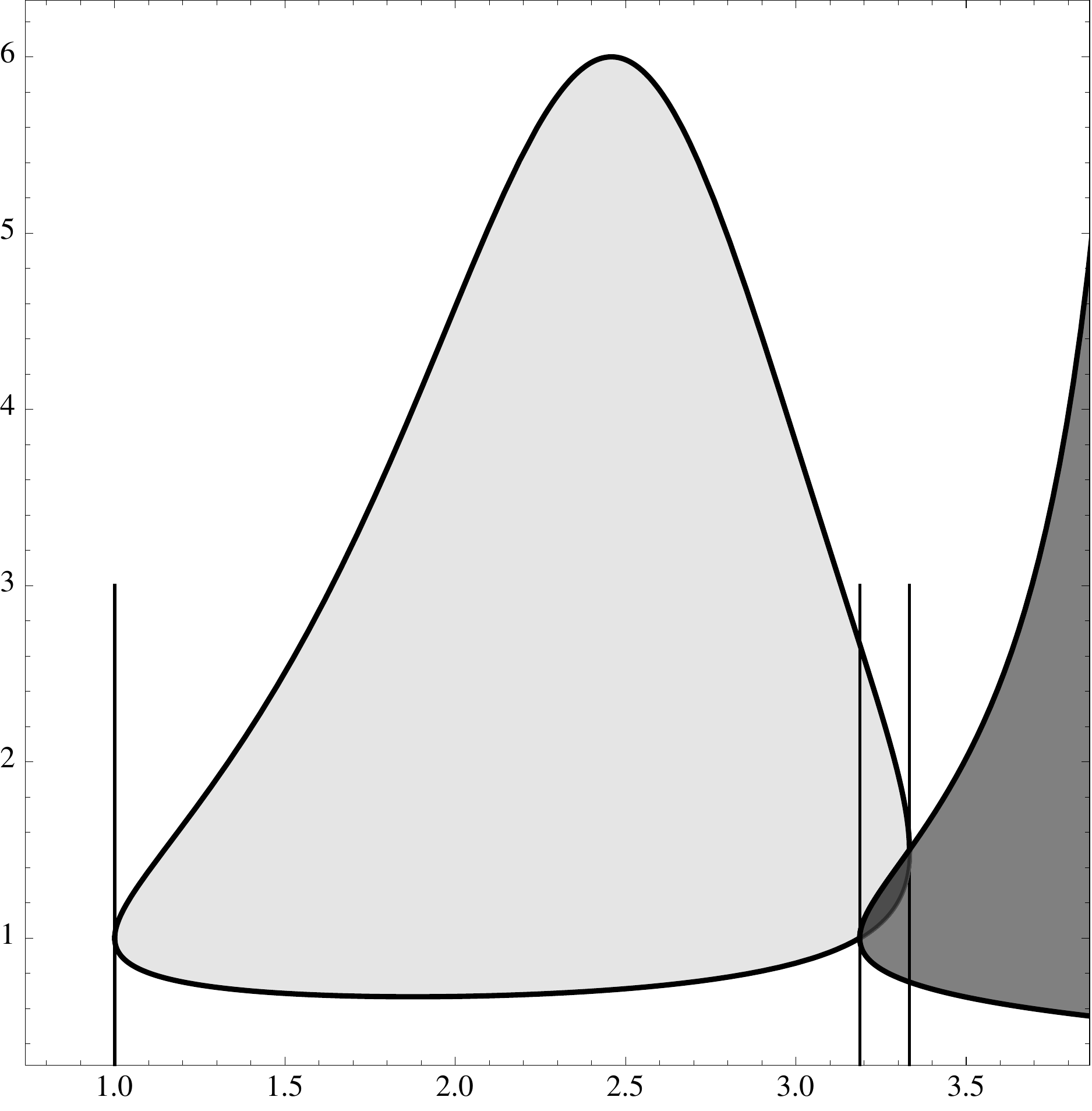}
\caption{\label{F5}\small\sl Here is the $(p,\beta)$ representation when $d=5$. The grey area and the overlapping region (dark grey area) correspond to $\mathsf A>0$. There is a region in which $\mathsf A$ is positive, which intersects the admissible range (light grey area) of $\beta$. Vertical lines are located at $p=1$, $p=2^\#$ and $p=2^*$.}
\end{figure}

\begin{figure}[ht]\bigskip
\includegraphics[width=5.5cm]{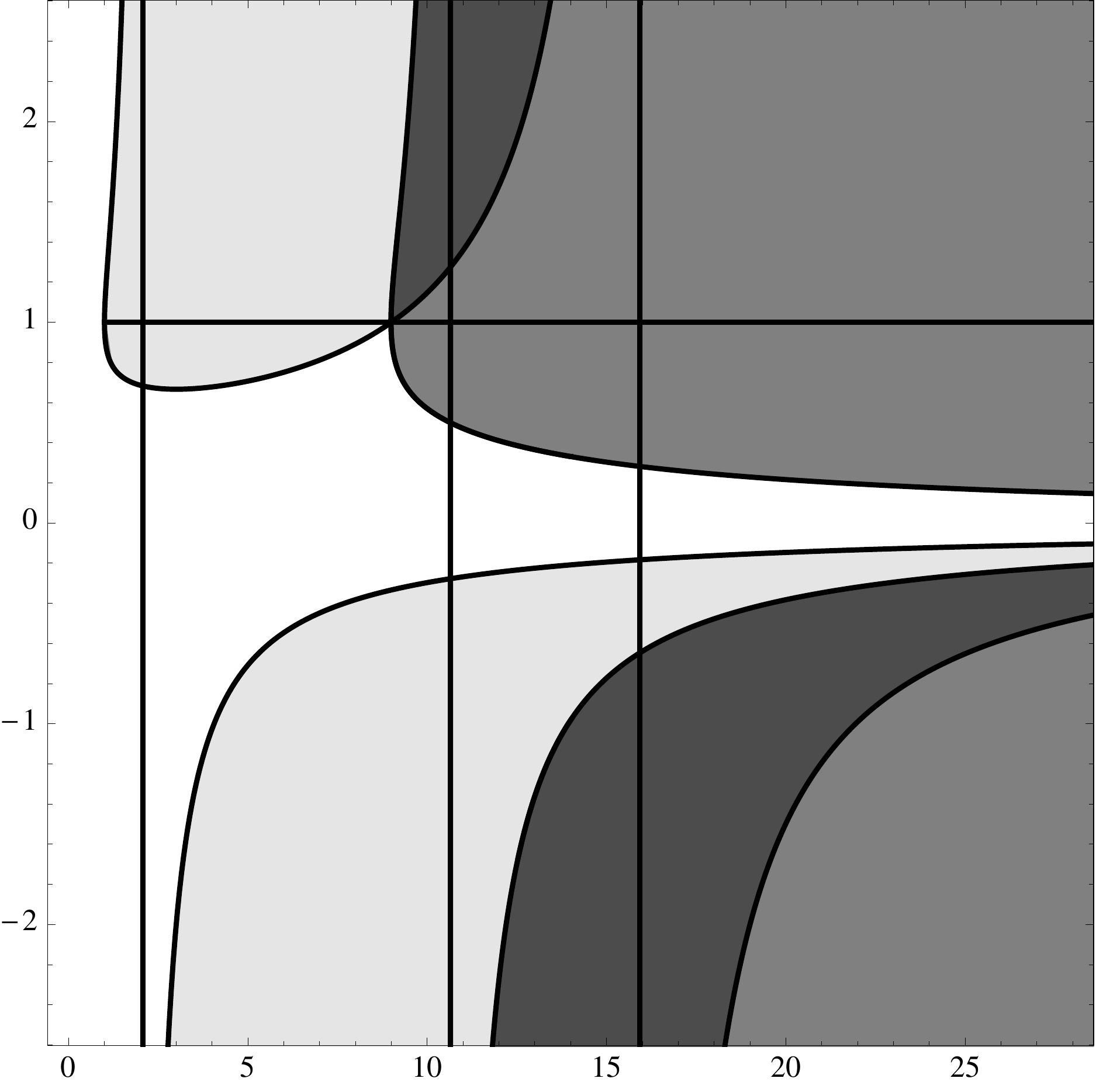}\hspace*{9pt}\includegraphics[width=5.5cm]{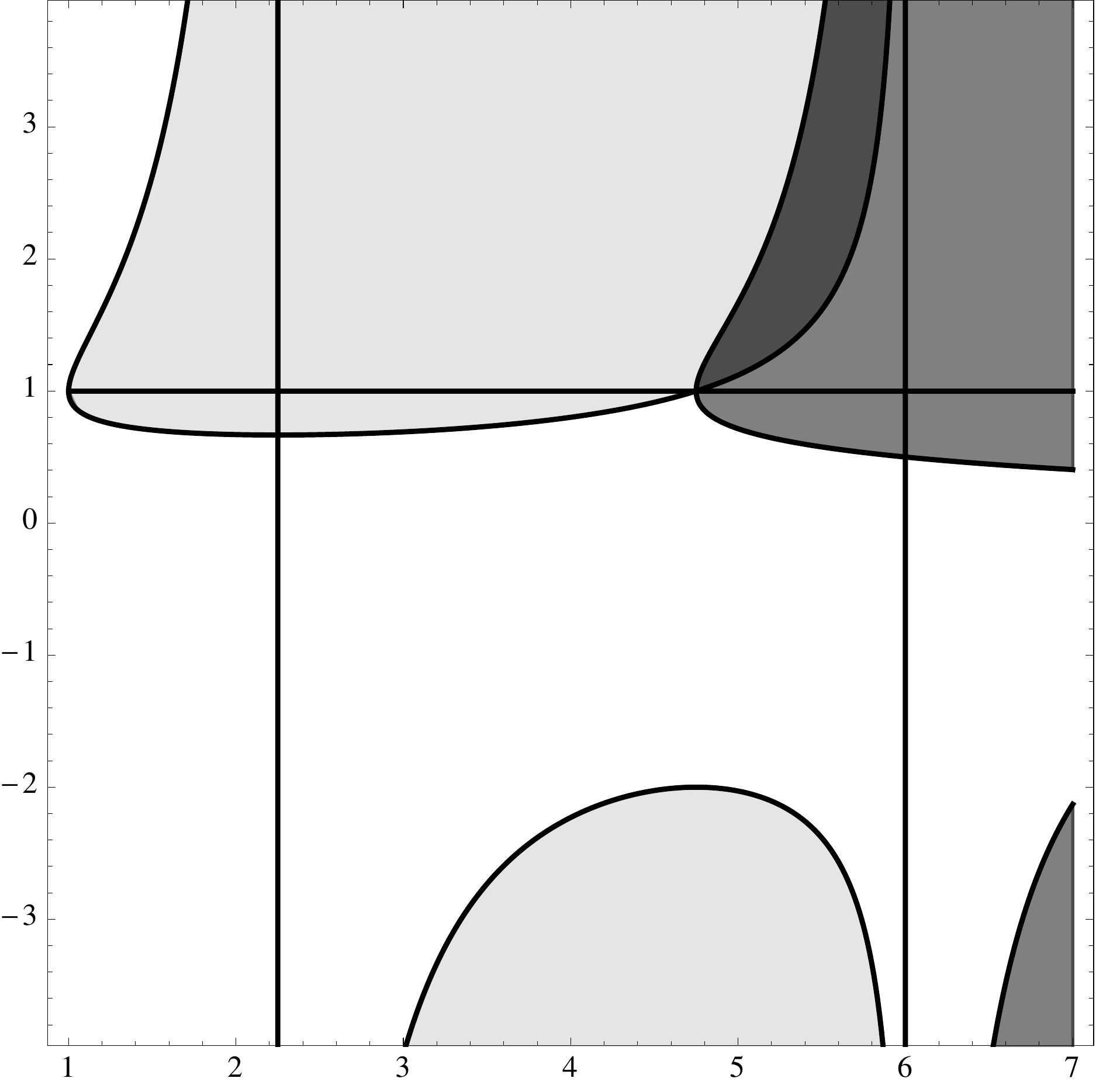}
\caption{\label{F6}\small\sl The discussion of the admissible range of $\beta$ and the positivity region of $\mathsf A$ is more complicated in dimension $d=3$ (right) than for $d\ge4$. A similar discussion can also be done in dimension $d=2$ (left). Again the light grey areas correspond to admissibility of $\beta$, the grey areas to the zone where $A>0$, and the dark grey areas to the zones which are interesting to us, where $\beta$ is admissible, and $\mathsf A>0$.}
\end{figure}

\clearpage

\section{Improvements}\label{Sec:Improvements}

In this last section we investigate improved inequalities or, to be precise, improvements on the optimal constants, that can be achieved in inequalities~\eqref{Ineq:GNS} and~\eqref{Ineq:logSob} when additional integral constraints are imposed. The general message is that improvements can always be obtained, but semi-explicit (and probably non-optimal) constants are known only when $p<2^\#$. The main goal of this section is to sketch the proof of Theorem~\ref{Thm:Improvement}. Let us start by reviewing a few results.

\medskip If $p\in[1,2)$, we may refer to~\cite[Theorem~1.2]{DEKL2014} for an improvement based on the spectral decomposition associated with the Laplace-Beltrami operator on $\S^d$ and standard $\mathrm L^2(\S^d)$ orthogonality constraints.

A more striking improvement has been obtained in~\cite[Sections~4.5 and~4.6]{DEKL2012}. Under the assumption that $f(-z)=f(z)$ for any $z\in(-1,1)$ a.e., Inequality~\eqref{GNS-ultra} can be improved to
\[
-\scal f{\L f}=\ix{|f'|^2\;\nu}\ge\frac{d^2+(d-1)^2\,(2^\#-p)}{d\,(p-2)}\,\(\nrmx fp^2-\nrmx f2^2\)
\]
for any $p\in[1,2)\cup(2,2^\#)$. When $p=1$, we observe that $[d^2+(d+1)^2\,(2^\#-p)]/d=2\,(d+1)$ is the second positive eigenvalue of $-\,\L$. As a limit case corresponding to $p=2$, the improvement also covers the case of the logarithmic Sobolev inequality and shows that
\[
-\scal f{\L f}\ge\frac{d^2+4\,d-1}{2\,d}\ix{|f|^2\,\log\(\frac{|f|^2}{\nrmx f2^2}\)}\,,
\]
for any function $f\in\mathrm H^1((-1,1),d\nu_d)$ such that $f(-z)=f(z)$ for any $z\in(-1,1)$ a.e. We will state a better result (Theorem~\ref{Thm:Antipodal}) under an \emph{antipodal symmetry} assumption at the end of this section.

\medskip Let us state a first new result on improvements that provides us with a non-constructive constant.
\begin{prop}\label{Prop:BE} Assume that $p\in[1,2)\cup(2,2^*]$ and $q\in(2,2^*)$. Then there exists a constant $\mathsf C_{p,q}>d$ such that
\[
-\scal f{\L f}=\ix{|f'|^2\;\nu}\ge\frac{\mathsf C_{p,q}}{p-2}\,\(\nrmx fp^2-\nrmx f2^2\)
\]
for any a.e.~function $f\in\mathrm H^1((-1,1),d\nu_d)$ such that
\[
\ix{z\,|f|^q}=0\,.
\]
\end{prop}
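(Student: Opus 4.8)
The plan is to prove Proposition~\ref{Prop:BE} by contradiction, following the spirit of the Bianchi--Egnell argument~\cite{MR1124290} adapted to the ultraspherical setting, together with the improved inequalities of Propositions~\ref{propflowbeta1}--\ref{propflowbetanot1}. Suppose no such $\mathsf C_{p,q}>d$ exists. Then there is a minimizing sequence $(f_n)$, which after normalization we may take to satisfy $\ix{|f_n|^2}=1$ and the constraint $\ix{z\,|f_n|^q}=0$, such that the quotient $\mathcal R[f_n]:=(p-2)\,\ix{|f_n'|^2\,\nu}\big/\big(\nrmx{f_n}p^2-\nrmx{f_n}2^2\big)$ converges to $d$ from above. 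Since $d$ is the optimal constant in~\eqref{GNS-ultra}, this says $f_n$ is an \emph{almost-minimizing} sequence for the unconstrained problem. The first step is to understand its behaviour: constant functions are the only minimizers, and by the improved inequality along the nonlinear flow (Proposition~\ref{propflowbetanot1}, equivalently the integral remainder term $\ix{|f'|^4 f^{-2}\,\nu^2}$ that appears in the $\frac d{dt}\mathsf F$ identities) the deficit $\mathcal R[f_n]-d$ controls, up to the passage through $\rho=|f|^p$, a quantity forcing $f_n$ to concentrate near a constant. Concretely I would show $f_n\to1$ (after adjusting signs and the overall constant) strongly in $\mathrm H^1((-1,1),d\nu_d)$.

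The second step is the linearization. Write $f_n=1+\varepsilon_n\,g_n$ with $\varepsilon_n\to0$, $\ix{g_n}$ normalized and $\ix{|g_n|^2}=1$ say (one must be a little careful about which normalization survives; the natural one is to project off constants and rescale). Expanding each term of the quotient $\mathcal R$ to second order in $\varepsilon_n$, the numerator is $\varepsilon_n^2\,\ix{|g_n'|^2\,\nu}+o(\varepsilon_n^2)$ and the denominator is $\varepsilon_n^2\,(p-1)\,\ix{|g_n|^2}+o(\varepsilon_n^2)$ minus the constant-mode contribution (the $\ell=0$ spherical harmonic), so that in the limit the condition $\mathcal R[f_n]\to d$ becomes $\ix{|g_\infty'|^2\,\nu}= d\,\ix{|g_\infty|^2}$ for the weak limit $g_\infty$ of $g_n$ in the space orthogonal to constants. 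By the spectral theory of the ultraspherical operator $-\L$, whose eigenvalues are $\lambda_k=k\,(k+d-1)$, $k\in\N$, with $\lambda_1=d$, this forces $g_\infty$ to lie in the first eigenspace, i.e. $g_\infty(z)=c\,z$ for some constant~$c$. Simultaneously, I must track the constraint $\ix{z\,|f_n|^q}=0$ through the linearization: expanding $|f_n|^q=1+q\,\varepsilon_n\,g_n+O(\varepsilon_n^2)$ and using $\ix{z}=0$, the constraint reads $q\,\varepsilon_n\,\ix{z\,g_n}+O(\varepsilon_n^2)=0$, hence $\ix{z\,g_n}\to0$, so $\ix{z\,g_\infty}=0$.

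The contradiction then closes immediately: $g_\infty(z)=c\,z$ together with $\ix{z\,g_\infty}=c\,\ix{z^2}=0$ and $\ix{z^2}>0$ forces $c=0$, so $g_\infty\equiv0$; but the strong $\mathrm H^1$ convergence (which I would upgrade from weak convergence using that both the numerator and denominator of $\mathcal R$ converge, so $\ix{|g_n'|^2\,\nu}\to\ix{|g_\infty'|^2\,\nu}$) gives $\ix{|g_n|^2}\to0$, contradicting the normalization $\ix{|g_n|^2}=1$. The range restriction $q\in(2,2^*)$ (strict) enters only to ensure that the map $f\mapsto\ix{z\,|f|^q}$ is continuous for the relevant topology and that the second-order expansion of $|f|^q$ is legitimate near $f\equiv1$; the range $p\in[1,2)\cup(2,2^*]$ with the endpoint $p=2^*$ allowed is exactly the range where~\eqref{GNS-ultra} holds with optimal constant $d$ and compactness of the Sobolev embedding (for $p<2^*$) or the critical-case rigidity of Proposition~\ref{prop:obstruction1} (for $p=2^*$) gives the needed structure of minimizers.

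The main obstacle, and the place where care is genuinely required, is the \textbf{compactness/concentration} step: showing that an almost-minimizing sequence for~\eqref{GNS-ultra} must converge strongly (modulo the constant) to a constant function, rather than, say, developing a bubble. For $p<2^*$ this is soft, since $\mathrm H^1((-1,1),d\nu_d)\hookrightarrow\mathrm L^p$ is compact and the only critical points are constants; for $p=2^*$ and $d\ge3$ one cannot rely on compactness and must instead argue that the possible non-compact behaviour is exactly the conformal family~\eqref{ConfSolutions}, which however is \emph{not} annihilated by the linear constraint $\ix{z\,|f|^q}=0$ for $q$ in the stated range — this is precisely the ultraspherical analogue of Bianchi--Egnell and is why the proof is non-constructive (it gives existence of $\mathsf C_{p,q}>d$ by contradiction but no explicit value). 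I would handle $p=2^*$ by transporting the Bianchi--Egnell stability inequality for the Euclidean Sobolev inequality through the inverse stereographic projection, exactly as announced in the bullet list of Section~\ref{Sec:Intro}, and then noting that the residual translation/dilation degrees of freedom are broken by the constraint.
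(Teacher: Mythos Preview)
Your proposal is correct and follows essentially the same approach as the paper: both argue by contradiction, use an improved (strict-convexity / remainder-term) version of~\eqref{GNS-ultra} to force any almost-minimizing sequence to converge to a constant, linearize as $f_n=1+\varepsilon_n g_n$ to conclude that the limiting direction $g_\infty$ must lie in the first eigenspace of $-\L$ (hence be proportional to $z$), and then obtain a contradiction from the constraint $\ix{z\,|f_n|^q}=0$; the critical case $p=2^*$ is handled in both by transporting the Bianchi--Egnell argument through the stereographic projection. The only cosmetic difference is that the paper cites the strictly convex function $\Phi$ from~\cite{MR2381156,DEKL2014} directly, whereas you invoke the equivalent flow-remainder identities of Propositions~\ref{propflowbeta1}--\ref{propflowbetanot1}.
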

This result is based on a Bianchi-Egnell type improvement in the subcritical range and generalizes the result of~\cite{MR1124290} to $p<2^*$.
\begin{proof} A simple spectral decomposition shows that $\mathsf C_{p,1}=2\,(d+1)$.

Assume next that $p\in(1,2)\cup(2,2^*)$. It has been proved in~\cite{MR2381156} and in~\cite[Theorem~1.1]{DEKL2014} that there exists a strictly convex function $\Phi$ on $\R^+$ such that $\Phi(0)=0$, $\Phi'(0)=1$ and
\begin{multline*}
\nrm{\nabla u}2^2\ge d\,\Phi\(\frac{\nrm up^2-1}{p-2}\)\quad\forall\,u\in\mathrm H^1(\S^d,d\mu)\\
\mbox{such that}\quad\nrm u2^2=1\,.
\end{multline*}
The same improvement is also true in the context of the ultraspherical operator as can be checked from~\cite[Sections~3 and~4]{DEKL2014}. Hence we have that
\begin{multline*}
-\scal f{\L f}\ge d\,\Phi\(\frac{\nrmx fp^2-1}{p-2}\)\quad\forall\,f\in\mathrm H^1((-1,1),d\nu_d)\\
\mbox{such that}\quad\nrmx f2^2=1\,.
\end{multline*}
It is clear that the infimum of $-(p-2)\,\scal f{\L f}/\(\nrmx fp^2-\nrmx f2^2\)$ can be taken under the additional constraint $\nrmx f2=1$ without restriction and that it can be achieved only in the limit as $f\to1$. If the limit is equal to $d$, then $f-1$ is up, to higher order terms, proportional to $z$, which contradicts the constraint $\ix{z\,|f|^q}=0$. This proves that $\mathsf C_{p,q}>d$.

If $p=2^*$, Inequality~\eqref{GNS-ultra} is equivalent to the classical Sobolev inequality on $\R^d$, as can be shown using the stereographic projection. Arguing by contradiction, as in~\cite{MR1124290}, and using the fact that, due to the constraint, the function (after stereographic projection) is asymptotically in the orthogonal to the manifold of Aubin-Talenti functions, we get that $\mathsf C_{2^*\!,q}>d$. Of course one has to take care of all invariances as was done in~\cite{MR1124290}, that is, of the conformal invariance on $\S^d$. Technical details are left to the reader.\end{proof}

Now let us turn our attention to the proof of Theorem~\ref{Thm:Improvement}. Inequality~\eqref{Ineq:GNSimproved} follows from Proposition~\eqref{Prop:BE} when $p=q<2^*$ and $u$ depends only on $z\in(-1,1)$. For simplicity, we will argue in this simplified setting and only indicate how to extend the result to the general case. In analogy with the definition of $\Lambda^\star$, let us define
\[
\lambda^\star\quad:=\inf_{\begin{array}{c}v\in\mathrm H^1_+((-1,1),d\nu_d)\\ \ix{v}=1\\ \ix{z\,|v|^p}=0\end{array}}\frac{\ix{{(\L v)^2}}}{\ix{|v'|^2\;\nu}}>d
\]
and consider the inequality
\begin{multline}\label{Ineq:GNSimprovedSimplified}
\ix{|f'|^2\;\nu}+\frac\lambda{p-2}\,\nrmx f2^2\ge\frac\lambda{p-2}\,\nrmx fp^2\\
\forall\,f\in\mathrm H^1((-1,1),d\nu_d)\;\mbox{s.t.}\;\ix{z\,|f|^p}=0
\end{multline}
\begin{prop}\label{Prop:ImprovementSimplfied} For any $p\in(2,2^\#)$, inequality~\eqref{Ineq:GNSimprovedSimplified}
holds with
\[
\lambda\ge d+\frac{(d-1)^2}{d\,(d+2)}\,(2^\#-p)\,(\lambda^\star-d)\,.
\]\end{prop}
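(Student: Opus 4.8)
The plan is to run the nonlinear flow \eqref{FDE} (equivalently \eqref{Eqn:Nonlinear}) with $\beta=\beta_-(p,d)$, starting from $\rho(0,\cdot)=|f|^p$ where $f$ satisfies the constraint $\ix{z\,|f|^p}=0$, and to extract from Proposition~\ref{propflowbetanot1} a decay estimate that is strictly better than the one giving the bare constant $d$. Concretely, I would write $\mathsf F_\lambda[\rho]:=\tfrac1d\ix{|(\rho^{1/p})'|^2\,\nu}-\tfrac1{p-2}\big[(\ix\rho)^{2/p}-\ix{\rho^{2/p}}\big]$ (suitably normalized) and try to show $\frac d{dt}\big(\mathcal I_p[\rho]-\lambda\,\mathcal E_p[\rho]\big)\le0$ along the flow for the claimed value of $\lambda$, then pass to the limit $t\to\infty$ as in the Bakry--Emery argument recalled in Section~\ref{Sec:Flows}. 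The gain over $d$ must come from the remainder term $\gamma(\beta)\ix{\frac{|w'|^4}{w^2}\,\nu^2}$ in Proposition~\ref{propflowbetanot1}: with $\beta=\beta_-$ one has $\gamma(\beta_-)=0$, so instead I would keep $\beta$ slightly away from $\beta_-$ — or, more cleanly, use the identity of Proposition~\ref{propflowbetanot1} together with a Poincar\'e-type inequality controlling $\ix{\frac{|w'|^4}{w^2}\,\nu^2}$ from below by $\ix{|w'|^2\,\nu}$ on the set where the linearized constraint holds.

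The key steps, in order, would be: (1) rephrase \eqref{Ineq:GNSimprovedSimplified} as the monotonicity of $\mathcal I_p[\rho]-\lambda\,\mathcal E_p[\rho]$ along \eqref{FDE}, and compute $\frac d{dt}\big(\mathcal I_p[\rho]-\lambda\,\mathcal E_p[\rho]\big)$; the extra $-(\lambda-d)$ multiplying $\frac d{dt}\mathcal E_p[\rho]=-\mathcal I_p[\rho]$ in the linear limit produces a term proportional to $\ix{|w'|^2\,\nu}$ (a Fisher-information/Dirichlet-energy term). (2) Bound this unfavourable term against the favourable remainder $\gamma(\beta)\ix{\frac{|w'|^4}{w^2}\,\nu^2}$ from Proposition~\ref{propflowbetanot1} via a weighted Poincar\'e inequality valid on functions $w$ with $\ix{z\,w^{\beta p}}$ in the appropriate subspace; this is where $\Lambda^\star$ (or $\lambda^\star$ in the ultraspherical setting) enters, since $\lambda^\star$ is exactly the best constant in $\ix{(\L v)^2}\ge\lambda^\star\,\ix{|v'|^2\,\nu}$ under $\ix{z\,|v|^p}=0$. (3) Optimize the resulting inequality in $\beta$ (or in the gap $\beta_--\beta$) and identify the coefficient $\frac{(d-1)^2}{d(d+2)}(2^\#-p)$ as the derivative of $\gamma_1$-type quantities in $p$ near $p=2^\#$ — indeed $\gamma_1=\big(\tfrac{d-1}{d+2}\big)^2(p-1)(2^\#-p)$ from Proposition~\ref{propflowbeta1}, which already exhibits the factor $(2^\#-p)$. (4) Conclude by the asymptotic argument: since $\ix{z\,|\rho|^p}=0$ is \emph{not} preserved by \eqref{FDE} for $m\neq1$, one must either control the drift of this quantity (it is, however, preserved to leading order near the constant state, which suffices because $\lambda^\star$ is used only for the limiting/linearized problem) or, more robustly, follow the scheme of~\cite{DEKL2014} reducing everything to $\mathcal K_p\le0$ plus a spectral correction, and then let $t\to\infty$ using $\mathcal E_p[\rho(t,\cdot)]\to0$.

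The main obstacle I anticipate is precisely step (4): the constraint $\ix{z\,|f|^p}=0$ is not conserved along the nonlinear flow, so one cannot naively integrate the differential inequality from $t=0$ to $t=\infty$ while keeping $f$ in the constrained class. The paper itself flags this ("a major difficulty comes from the fact that the property $\iS{x\,\rho}=0$ is not conserved by the flow"), and it is the reason the explicit estimate is available only for $p<2^\#$. I expect the resolution to be that for $p\le2^\#$ the $\gamma_1$-remainder is nonnegative \emph{without} any constraint, so one runs the flow unconstrained, obtains $\mathcal I_p[\rho]-d\,\mathcal E_p[\rho]\ge c\int_0^\infty\ix{\frac{|u'|^4}{u^2}\nu^2}\,dt$ (an improved inequality valid for all $f$), and then applies the constraint only to the limiting linearized object, where $\lambda^\star$ legitimately applies; matching the two gives the stated lower bound on $\lambda$. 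The remaining work is then the algebraic bookkeeping to produce the clean constant $\frac{(d-1)^2}{d(d+2)}(2^\#-p)$, which should follow from the explicit forms of $\gamma_1$ and $\beta_-$ already recorded above. Extending from the ultraspherical setting (Proposition~\ref{Prop:ImprovementSimplfied}) to the full statement of Theorem~\ref{Thm:Improvement} is then the standard symmetrization/componentwise argument indicated after the statement of that theorem.
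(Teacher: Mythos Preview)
Your proposal misses the central device of the paper's proof and, as a result, gets stuck on exactly the obstacle you flag in step~(4). The paper does \emph{not} run the nonlinear flow~\eqref{FDE} with $\beta=\beta_-(p,d)$ here; it runs the \emph{heat flow}~\eqref{heat} (i.e.\ $\beta=1$). The reason is precisely the one you identified: the constraint $\ix{z\,|u|^p}=0$ is preserved by the heat flow (one computes $\frac d{dt}\ix{z\,|u|^p}=-\,d\ix{z\,|u|^p}$ along~\eqref{Eqn:Linear}), so it holds for all $t\ge0$, and the spectral quantity $\lambda^\star$ can legitimately be invoked along the \emph{entire} trajectory, not only in a linearized limit. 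Your attempted workarounds (controlling the drift of the constraint, or applying $\lambda^\star$ only asymptotically) are not needed and would not yield the clean constant.

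The second ingredient you miss is algebraic. Instead of completing the square on $u''$ as in the derivation of $\gamma_1$, the paper regroups the time derivative as
\[
\Big(1-(p-1)\,\tfrac{(d-1)^2}{d\,(d+2)}\Big)\ix{|u''|^2\,\nu^2}\;+\;\tfrac d{d+2}\,(p-1)\ix{\Big|\tfrac{|u'|^2}u-\tfrac{d-1}d\,u''\Big|^2\nu^2}\,,
\]
and the first coefficient is positive exactly because $p<2^\#$. One then uses the Bochner identity $\ix{|u''|^2\,\nu^2}=\ix{(\L u)^2}-d\ix{|u'|^2\,\nu}$ together with $\ix{(\L u)^2}\ge\lambda^\star\ix{|u'|^2\,\nu}$ (valid since the constraint is preserved) to get $\ix{|u''|^2\,\nu^2}\ge(\lambda^\star-d)\ix{|u'|^2\,\nu}$. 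This immediately gives monotonicity of $\ix{|u'|^2\,\nu}+\tfrac\lambda{p-2}\,|u|^2$ with the stated $\lambda$. Your step~(2), bounding $\ix{\tfrac{|w'|^4}{w^2}\,\nu^2}$ from \emph{below} by $\ix{|w'|^2\,\nu}$, goes in the wrong direction: no such inequality holds (scale $w'$ small), and the gain comes instead from the second-order term $\ix{|u''|^2\,\nu^2}$, not from the quartic remainder.
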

\begin{proof} We consider the heat flow~\eqref{heat} applied to a function $\rho$ with initial datum $|f|^p$, or equivalently, the flow defined by~\eqref{Eqn:Linear} applied to a function~$u$ with initial datum $f$. We observe that
\begin{multline*}
\frac d{dt}\ix{z\,|u|^p}=p\ix{z\,|u|^{p-2}\,u\(\L u+(p-1)\,\frac{|u'|^2}u\)}\\=-\,p\ix{|u|^{p-2}\,u\,u'\nu}=-\,d\ix{z\,|u|^p}\,.
\end{multline*}
Hence, if $\ix{z\,|u|^p}=0$ at $t=0$, this is also true for any $t>0$. From now on, we shall assume that this constraint is satisfied.

With no restriction, we may assume that $u$ is positive. Instead of writing that
\begin{multline*}
-\,\frac12\,\frac d{dt}\ix{\(|u'|^2\,\nu+\frac d{p-2}\,|u|^2\)}\\
=\ix{|u''|^2\,\nu^2}-2\,\frac{d-1}{d+2}\,(p-1)\ix{u''\,\frac{|u'|^2}u\,\nu^2}\\
+\frac d{d+2}\,(p-1)\ix{\frac{|u'|^4}{u^2}\,\nu^2}\,,
\end{multline*}
we can write that
\begin{multline*}
-\,\frac12\,\frac d{dt}\ix{\(|u'|^2\,\nu+\frac d{p-2}\,|u|^2\)}\\
=\(1-(p-1)\,\frac{(d-1)^2}{d\,(d+2)}\)\ix{|u''|^2\,\nu^2}\\
+\frac d{d+2}\,(p-1)\ix{\left|\frac{|u'|^2}u-\frac{d-1}d\,u''\right|^2\,\nu^2}
\end{multline*}
and observe that $1-(p-1)\,\frac{(d-1)^2}{d\,(d+2)}$ is positive since $p<2^\#:=\frac{2\,d^2+1}{(d-1)^2}$. Using the formula
\be{Eqn:u''}
\ix{|u''|^2\;\nu^2}=\ix{(\L u)^2}-d\ix{|u'|^2\;\nu}\,,
\ee
and the definition of $\lambda^\star$, we find that
\[
-\,\frac12\,\frac d{dt}\ix{\(|u'|^2\,\nu+\frac\lambda{p-2}\,|u|^2\)}\ge0
\]
if
\[
\lambda=d+\frac{(d-1)^2}{d\,(d+2)}\,(2^\#-p)\,(\lambda^\star-d)\,.
\]
Using $u=1+\varepsilon\,u_2$ with $\varepsilon<1/2$ and $u_2(z)=z^2-2$ as a test function, we obtain that $\lambda^\star\le2\,(d+1)$ since $-\,\L u_2=2\,(d+1)\,u_2$.
\end{proof}

Proposition~\ref{Prop:ImprovementSimplfied} provides an improvement of the constant~$\lambda$ because of the following estimate.
\begin{prop}\label{Prop:>d} For any $p\in(2,2^*)$, we have that
\[
\lambda^\star>d\,.
\]
\end{prop}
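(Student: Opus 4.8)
The plan is to prove $\lambda^\star>d$ by a compactness-plus-classification argument, exploiting the structure of $\L$ and its eigenfunctions. First I would observe that $\lambda^\star\ge d$ is automatic: for any admissible $v$ we may write $v=1+h$ with $\ix h=0$, expand $v$ in the Hilbert basis of eigenfunctions of $-\L$ (the Gegenbauer/ultraspherical polynomials), and use the spectral representation to get $\ix{(\L v)^2}=\sum_k\Lambda_k^2\,c_k^2$ and $\ix{|v'|^2\,\nu}=-\scal v{\L v}=\sum_k\Lambda_k\,c_k^2$, where $\Lambda_1=d<\Lambda_2=2(d+1)<\dots$ are the eigenvalues of $-\L$. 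Thus the Rayleigh quotient $\ix{(\L v)^2}/\ix{|v'|^2\,\nu}$ equals a weighted average of the $\Lambda_k$ with weights $\Lambda_k c_k^2$, hence is $\ge\Lambda_1=d$, with equality only when $h$ lies in the $\Lambda_1$-eigenspace, i.e. $v=1+\varepsilon z$ for some $\varepsilon$.

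The heart of the argument is then to rule out the equality case $\lambda^\star=d$ using the constraint $\ix{z\,|v|^p}=0$. I would argue by contradiction: suppose $\lambda^\star=d$. Take a minimizing sequence $v_n\in\mathrm H^1_+$ with $\ix{v_n}=1$, $\ix{z\,|v_n|^p}=0$, and Rayleigh quotient $\to d$. One wants to pass to a limit; the natural normalization controls $\ix{|v_n'|^2\,\nu}$ (it must stay bounded below away from $0$, otherwise $v_n\to1$ in $\mathrm H^1$ and the constraint $\ix{z\,|v_n|^p}=0$ is automatically satisfied in the limit but the quotient $\ix{(\L v_n)^2}/\ix{|v_n'|^2\,\nu}$ is a $0/0$ indeterminate form that must be handled by a blow-up/linearization). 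The clean way is the blow-up: write $v_n=1+\varepsilon_n w_n$ with $\ix{w_n}=0$, $\ix{|w_n'|^2\,\nu}=1$, and $\varepsilon_n\to0$ if no uniform lower bound holds. Expanding $|v_n|^p=1+p\,\varepsilon_n w_n+O(\varepsilon_n^2)$, the constraint gives $\ix{z\,w_n}=O(\varepsilon_n)\to0$, so in the limit $w_n\rightharpoonup w$ with $w$ orthogonal to $z$; but the quotient tending to $d$ forces $w$ to lie in the $\Lambda_1$-eigenspace, i.e. $w$ is a multiple of $z$, hence $w=0$, contradicting $\ix{|w'|^2\,\nu}=1$ once weak-to-strong convergence is established. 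If instead $\varepsilon_n$ stays bounded away from $0$, a genuine (nonlinear) minimizer $v_\star\not\equiv1$ exists by the compact Sobolev embedding, satisfies the Euler--Lagrange equation, has quotient exactly $d$, hence by the spectral rigidity above $v_\star=1+\varepsilon z$ — but then $\ix{z\,|v_\star|^p}=\ix{z\,|1+\varepsilon z|^p}\neq0$ for $\varepsilon\neq0$ (the integrand is, to leading order in $z$, an odd times a function with nonzero average), again a contradiction. Either way $\lambda^\star>d$.

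The main obstacle I anticipate is the blow-up branch: making rigorous that along a minimizing sequence with $\varepsilon_n\to0$ the weak limit of the rescaled perturbations is actually strong in $\mathrm H^1((-1,1),d\nu_d)$, so that $\ix{|w'|^2\,\nu}=1$ survives in the limit. This requires showing the quotient $\ix{(\L v_n)^2}/\ix{|v_n'|^2\,\nu}$, when expanded to second order in $\varepsilon_n$, is bounded below by $d+c\,\mathrm{dist}(w_n,\mathrm{span}\{z\})^2$ for some $c>0$ — a quantitative spectral gap between the first and second eigenvalues of $-\L$ applied to the rescaled deficit — together with the fact that the constraint pushes $w_n$ away from $\mathrm{span}\{z\}$ only to order $\varepsilon_n$, which is too weak by itself and forces one to also use a second-order Taylor expansion of the constraint. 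In practice, since Proposition~\ref{Prop:BE} already gives the strengthened inequality $-\scal f{\L f}\ge d\,\Phi\big((\nrmx fp^2-1)/(p-2)\big)$ with $\Phi$ strictly convex and $\Phi'(0)=1$ under exactly the constraint $\ix{z\,|f|^p}=0$, the cleanest route is to bypass the blow-up entirely: combine that improved inequality with identity~\eqref{Eqn:u''} and the definition of $\lambda^\star$ directly, deducing that if $\lambda^\star=d$ then the improved functional inequality would be saturated in the small-perturbation regime, forcing the optimal perturbation to be proportional to $z$ and contradicting the constraint. That reduces Proposition~\ref{Prop:>d} to a short argument resting on Proposition~\ref{Prop:BE}.
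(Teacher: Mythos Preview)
Your argument is correct and lands on the same punchline as the paper's: equality in the Rayleigh quotient forces $v-1$ into the first eigenspace of $-\L$, i.e.\ $v=a+bz$, which is incompatible with the constraint $\ix{z\,|v|^p}=0$ for $v$ positive and non-constant. The paper organizes this more compactly. Rather than working directly with minimizing sequences for $\lambda^\star$, it first reduces to the Poincar\'e quotient by expanding $\ix{|\L u+\mu\,(u-\bar u)|^2}\ge0$ and optimizing in $\mu$, which yields
\[
\frac{\ix{(\L u)^2}}{\ix{|u'|^2\,\nu}}\ \ge\ \frac{\ix{|u'|^2\,\nu}}{\ix{|u-\bar u|^2}}\,,
\]
exactly your spectral observation, phrased without a basis expansion. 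It then simply asserts that the constrained Poincar\'e infimum on the right is \emph{achieved} by a nonnegative (hence, by the Maximum Principle, positive) minimizer and identifies it. This sidesteps your two-branch analysis by pushing the compactness issue into a lower-order problem; in effect the paper absorbs your blow-up branch into the existence statement rather than arguing it separately.

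Your worry about the blow-up branch is legitimate but the resolution is simpler than you suggest: strong $L^2$ convergence of the rescaled $w_n$ (from the compact embedding $\mathrm H^1\hookrightarrow L^2$) already gives $\|w\|_2=1$, and a first-order Taylor expansion of $(1+\varepsilon_n w_n)^p$ together with the $L^p$ bound on $w_n$ (valid since $p<2^*$) suffices to pass the constraint to the limit and obtain $\ix{z\,w}=0$, forcing $w=0$ --- no second-order expansion of the constraint or quantitative spectral gap is needed. The final detour through Proposition~\ref{Prop:BE} is unnecessary, and the connection you sketch between that improved interpolation inequality and $\lambda^\star$ is not transparent; the direct argument you outline before that already closes the proof.
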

Notice that the estimate of $\lambda$ based on $\lambda^\star$ is a constructive but non explicit estimate, as we do not know the value of $\lambda^\star$, and also that
\[
\frac{\ix{{(\L u)^2}}}{\ix{|u'|^2\;\nu}}=d\quad\mbox{and}\quad\ix{z\,|u|^p}=0
\]
if $u(z)=z$. However the condition $u>0$ on $(-1,1)$ is not satisfied in this example. Hence the positivity of $u$ in the infimum is crucial.

\begin{proof}[Proof of Proposition~\ref{Prop:>d}] For any $\mu>0$, by expanding the square in
\[
\ix{|\L u+\mu\,(u-\bar u)|^2}\ge0\,,
\]
and after an integration by parts, we observe that
\begin{multline*}
\ix{{(\L u)^2}}-\mu\ix{|u'|^2\;\nu}\\
\ge\mu\(\ix{|u'|^2\;\nu}-\mu\ix{|u-\bar u|^2}\)\,.
\end{multline*}
As a consequence, the infimum $\lambda^\star$ can be estimated by
\[
\lambda^\star\ge\inf_{\begin{array}{c}u\in\mathrm H^1_+((-1,1),d\nu_d)\\ \ix u=1\\ \ix{z\,|u|^p}=0\end{array}}\frac{\ix{|u'|^2\;\nu}}{\ix{{|u-1|^2}}}\,,
\]
which is achieved by some nonnegative function $u$. The Maximum Principle applies and shows that the minimizer is then positive. Since the optimality condition $\ix{(\L u)^2}=d\ix{|u'|^2\;\nu}$ would imply that $u$ is of the form $u(z)=a+b\,z$ for some $a$ and $b\in\R$ such that $|b|<|a|$, it is clear that the constraint $\ix{z\,u^p}=0$ cannot be matched unless $a=0$, which violates the positivity of $u$. This proves that $\lambda^\star=d$ is impossible.\end{proof}

Theorem~\ref{Thm:Improvement} can be proved using the same strategy as for Propositions~\ref{Prop:ImprovementSimplfied}--\ref{Prop:>d}, except that the flow~\eqref{heat} associated with the ultraspherical operator has to be replaced by the heat flow on $\S^d$ given by~\eqref{Eqn:Heat}. Computations are more technical and can be found in \cite{1302}. The key observation is again that $\iS{x\,\rho(t,x)}=0$ for any $t\ge0$ if $\iS{x\,\rho(t=0,x)}=0$.

The estimates of Theorem~\ref{Thm:Improvement} and Propositions~\ref{Prop:ImprovementSimplfied}--\ref{Prop:>d} are constructive for any $p\in(2,2^\#)$, but the values of the constants $\lambda^\star$ and $\Lambda^\star$ are not known so far. From their definitions we know that $\lambda^\star\ge\Lambda^\star$ but it is an open question so far to decide if equality holds or not.

\medskip In the limit case $p=2$, one can get the explicit estimate
\[
\Lambda^\star\ge d+\frac{2\,(d+2)}{2\,(d+3)+\sqrt{2\,(d+3)\,(2\,d+3)}}\,. 
\]
As a consequence, we obtain the following result.
\begin{prop}\label{ImprovedLogSob} Let $d\ge2$. For any $u\in\mathrm H^1(\S^d,d\mu)\setminus\{0\}$ such that $\iS{x\,|u|^2}=0$, we have
\[
\iS{|\nabla u|^2}\ge\,\frac\delta2\iS{|u|^2\,\log\(\frac{|u|^2}{\nrm u2^2}\)}
\]
with $\delta:=d+\frac 2d\,\frac{4\,d-1}{2\,(d+3)+\sqrt{2\,(d+3)\,(2\,d+3)}}$.
\end{prop}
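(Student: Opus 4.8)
The plan is to derive Proposition~\ref{ImprovedLogSob} as the $p=2$ degeneration of Theorem~\ref{Thm:Improvement}, combined with an explicit lower bound for the constant $\Lambda^\star$ evaluated with the quadratic constraint $\iS{x\,|v|^2}=0$.

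First I would establish the logarithmic Sobolev analogue of Theorem~\ref{Thm:Improvement}. Running the heat flow~\eqref{Eqn:Heat} with initial datum $\rho(0,\cdot)=|u|^2$, the constraint $\iS{x\,|u|^2}=0$ is preserved because the components of $x$ are eigenfunctions of $-\Delta$ for the eigenvalue $d$, so that $\frac d{dt}\iS{x\,\rho}=\iS{x\,\Delta\rho}=-\,d\iS{x\,\rho}$. One then repeats, for $p=2$, the \emph{carr\'e du champ} computation underlying Theorem~\ref{Thm:Improvement} (see~\cite{1302}): the coefficient in front of the second-order term, equal for general $p$ to $1-(p-1)\,\frac{(d-1)^2}{d\,(d+2)}$, becomes $\frac{4\,d-1}{d\,(d+2)}>0$ at $p=2$ for every $d\ge1$; and that second-order term is itself $\ge(\Lambda^\star-d)\,\iS{|\nabla\sqrt\rho|^2}$, since the square-completion of Proposition~\ref{Prop:>d} (expand $\iS{|\Delta v+\mu\,(v-1)|^2}\ge0$, optimize in $\mu$) upgrades the improved Poincar\'e inequality $\iS{|\nabla v|^2}\ge\Lambda^\star\iS{|v-1|^2}$ — available because $\iS{x\,v^2}=0$ is preserved — to $\iS{(\Delta v)^2}\ge\Lambda^\star\iS{|\nabla v|^2}$. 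Alternatively, one may simply let $p\to2^+$ in~\eqref{Ineq:GNSimproved}--\eqref{Ineq:lambda}, using that $\frac{\nrm up^2-\nrm u2^2}{p-2}\to\frac12\iS{|u|^2\log(|u|^2/\nrm u2^2)}$ and $(2^\#-p)\,\frac{(d-1)^2}{d\,(d+2)}\to\frac{4\,d-1}{d\,(d+2)}$. Either way this yields, for every $u$ with $\iS{x\,|u|^2}=0$, the asserted inequality with $\frac12\big(d+\frac{4\,d-1}{d\,(d+2)}(\Lambda^\star-d)\big)$ in place of $\frac\delta2$; since the relative entropy $\iS{|u|^2\log(|u|^2/\nrm u2^2)}$ is nonnegative, it then suffices to bound $\Lambda^\star$ from below.

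Next I would estimate $\Lambda^\star$ with $p=2$, that is, $\Lambda^\star=\inf\iS{|\nabla v|^2}/\iS{|v-1|^2}$ over $v\in\mathrm H^1_+(\S^d,d\mu)$ with $\iS v=1$ and $\iS{x\,v^2}=0$. A minimizer $v$ exists by compactness, is positive by the maximum principle, and solves the Euler--Lagrange equation $-\Delta v=\Lambda^\star(v-1)+\sigma+(\tau\cdot x)\,v$ with Lagrange multipliers $\sigma\in\R$, $\tau\in\R^{d+1}$. Writing $v=1+w$ with $\iS w=0$ and decomposing $w=\sum_{k\ge1}w_k$ into spherical harmonics of degree $k$, I would project this equation onto the eigenspaces of degrees $1,2,3,4$: the term $(\tau\cdot x)\,v$ couples consecutive degrees, while the constraint $\iS{x\,v^2}=0$ — which, since $\iS{x\,w_1^2}=\iS{x\,w_2^2}=0$, reads $\iS{x\,w_1}=-\iS{x\,w_1w_2}-\sum_{k\ge2}\iS{x\,w_kw_{k+1}}$ — controls the degree-$1$ content of $w$ by its degree-$2$ and higher content. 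Eliminating $\sigma$ and $\tau$, discarding the degrees $\ge5$ after a Cauchy--Schwarz estimate, and using the eigenvalues $3\,(d+2)$ and $4\,(d+3)$ of the degree-$3$ and degree-$4$ eigenspaces, one is led to a quadratic inequality
\[
3\,(d+3)\,(\Lambda^\star-d)^2-4\,(d+2)\,(d+3)\,(\Lambda^\star-d)+2\,(d+2)^2\le0\,.
\]
Because $\Lambda^\star\le2\,(d+1)$ — test with $v=1+\varepsilon\,Y$, $Y$ a second spherical harmonic — only the smaller root matters, which gives $\Lambda^\star\ge d+\frac{2\,(d+2)}{2\,(d+3)+\sqrt{2\,(d+3)\,(2\,d+3)}}$.

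Finally, inserting this bound into the constant of the first step and simplifying,
\[
\frac{4\,d-1}{d\,(d+2)}\cdot\frac{2\,(d+2)}{2\,(d+3)+\sqrt{2\,(d+3)\,(2\,d+3)}}=\frac2d\cdot\frac{4\,d-1}{2\,(d+3)+\sqrt{2\,(d+3)\,(2\,d+3)}}=\delta-d\,,
\]
which finishes the argument. The first step is routine, a verbatim specialization of~\cite{1302}. The hard part will be the last inequality of the second step: reducing the analysis of the minimizer to a single quadratic relation requires careful bookkeeping of how the nonlinear constraint $\iS{x\,v^2}=0$ mixes the first eigenspace with the degree-$2$, degree-$3$ and degree-$4$ components, and it is quite possible that a cleaner route — or a sharper constant — exists.
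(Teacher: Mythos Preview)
Your first and third steps are correct and match the paper's structure: the reduction to the $p=2$ case of~\eqref{Ineq:lambda}, the preservation of $\iS{x\,\rho}=0$ under the heat flow, the identification of the coefficient $\tfrac{4d-1}{d(d+2)}$, and the final arithmetic are all fine.

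The gap is in your second step, the lower bound on $\Lambda^\star$. Your Euler--Lagrange/spherical-harmonic scheme, as you have sketched it, never uses the positivity constraint $v\ge0$ quantitatively; you only invoke it to assert that a minimizer exists and satisfies a clean equation. But positivity is what makes $\Lambda^\star>d$ at all. Indeed, if one drops $v\ge0$ and keeps only $\iS v=1$ and $\iS{x\,v^2}=0$, the infimum of the Rayleigh quotient is exactly $d$: take $v=1+A\,x_1+B\,(x_1^2-\tfrac1{d+1})$, choose $B$ so that the constraint holds (this fixes $B$ independently of $A$), and let $|A|\to\infty$. Any argument that only manipulates the constraint and the Euler--Lagrange identities---projections onto degrees $1$ through $4$, elimination of multipliers, Cauchy--Schwarz on the tail---applies equally well along such a family of near-critical configurations and therefore cannot produce the strict inequality $\Lambda^\star>d$, let alone the specific quadratic you wrote down. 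In short, the quadratic inequality is reverse-engineered from the answer; nothing in your outline explains where the crucial quantitative input from $v\ge0$ enters.

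The paper's argument is both different and more elementary. Writing $u=1+a\cdot x+v$ with $v\perp\{1,x_1,\dots,x_{d+1}\}$, $b=\iS{(a\cdot x)^2}$, $c=\iS{v^2}$, one bounds the Rayleigh quotient below by $\mathsf e(c)=\tfrac{d\,b+2(d+1)\,c}{b+c}$, which is increasing in $c$, and then obtains two competing lower bounds on $c$. The first, from the \emph{pointwise} positivity $u\ge0$ at $x$ and $-x$, gives $b\le2(1+c)$, hence $c\ge\tfrac b2-1$; plugging into $\mathsf e$ yields an increasing function of $b$. The second, from the constraint $\iS{x\,u^2}=0$ and Cauchy--Schwarz, gives $c(c+4b)\ge\tfrac{4b}{d+1}$, hence $c\ge2\sqrt{b^2+\tfrac b{d+1}}-2b$; plugging into $\mathsf e$ yields a decreasing function of $b$. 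Equating the two determines $b=b_\star(d)$ and produces exactly
\[
\Lambda^\star\ge d+\frac{2\,(d+2)}{2\,(d+3)+\sqrt{2\,(d+3)\,(2\,d+3)}}\,.
\]
The positivity step is the one you are missing; once you have it, no spectral analysis beyond $\lambda_2=2(d+1)$ is needed.
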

\begin{proof} Our proof relies on an estimate of $\Lambda^\star$ when $p=2$. We write $u=1+a\cdot x+v$ where $v$ is orthogonal to the constants and all the $x_i$ with $i=1,\ldots,d+1$ and $a\in\mathbb R^{d+1}$. Moreover $u$ has to satisfy the constraint $u\ge 0$. Hence we have to minimize $\mathsf E=\mathsf E[v]$ such that
\[
\mathsf E=\frac{d\,b+\iS{|\nabla v|^2}}{b+\iS{v^2}}\ge\mathsf e\(\iS{v^2}\)
\]
where
\[
b:=\iS{(a\cdot x)^2}=\frac{|a|^2}{d+1}\,,\quad c:=\iS{v^2}
\]
and
\[
\mathsf e(c):=\frac{d\,b+2\,(d+1)\,c}{b+c}=2\,(d+1)-\frac{(d+2)\,b}{b+c}
\]
is monotone increasing in $c$. Our strategy is to bound $c$ from below in terms of $b$ and then minimize the resulting expression in terms of $b\ge0$.

\smallskip\noindent\emph{ First estimate.} From the fact that $u=1+a\cdot x+v\ge0$ we get that $-\,a\cdot x\le(1+v(x))$, \emph{i.e.},
\[
a\cdot x\ge-\,(1+v(x))\,.
\]
By exchanging $x$ with $-x$ we also get $a\cdot x\le1+v(-x)$. Hence we have that
\[
1+v(-x)\ge a\cdot x\ge-\,(1+v(x))\,,
\]
\emph{i.e.},
\[
|a\cdot x|\le\max\left\{|1+v(-x)|,|1+v(x)|\right\}
\]
or
\[
|a\cdot x|^2\le\max\Big\{|1+v(-x)|^2,|1+v(x)|^2\Big\}\le|1+v(-x)|^2+|1+v(x)|^2
\]
and now integrate. This proves that
\[
b=\iS{(a\cdot x)^2}\le2\(1+\iS{v^2}\)\,.
\]
We get a first inequality
\[
\iS{v^2}\ge\frac b2-1\,.
\]
This establishes the estimate
\be{FirstEstimate_p=2}
\mathsf E\ge\mathsf e\(\frac b2-1\)=2\,(d+1)-\frac{2\,(d+2)\,b}{3\,b-2}
\ee
where the r.h.s.~is an increasing function of $b>2$.

\smallskip\noindent\emph{ Second estimate.} We write $\iS{(1+a\cdot x+v)^2\,x_i}=0$ as
\[
\iS{v\,(v+2\,a\cdot x)\,x_i}=-\frac{2\,a_i}{d+1}\,.
\]
Note that $v$ is perpendicular to $x_i$. By Schwarz and then summing over $i$ we get
\[
\frac{4\,b}{d+1}=\frac{4\,|a|^2}{(d+1)^2}\le\iS{v^2}\(\iS{v^2}+\frac{4\,|a|^2}{d+1}\)=c\,(c+4\,b)\,.
\]
Setting $b=\frac{|a|^2}{d+1}$ as above, one easily gets a second inequality
\[
\iS{v^2}\ge2\,\sqrt{b^2+\frac b{d+1}}-2\,b=\frac2{d+1}\,\frac b{b+\sqrt{b^2+\frac b{d+1}}}\,.
\]
Hence we have found that
\be{SecondEstimate_p=2}
\mathsf E\ge\mathsf e\(2\,\sqrt{b^2+\frac b{d+1}}-2\,b\)=2\,(d+1)-\frac{(d+1)\,(d+2)}{d+1+\frac2{b+\sqrt{b^2+\frac b{d+1}}}}\,.
\ee
In this second estimate the r.h.s.~as a function of $b$ is monotone decreasing.

\smallskip\noindent\emph{ Conclusion of the proof.} By combining~\eqref{FirstEstimate_p=2} and~\eqref{SecondEstimate_p=2}, we obtain a global estimate which is independent of $b\ge0$. Let us solve
\[
\mathsf e\(\frac b2-1\)=\mathsf e\(2\,\sqrt{b^2+\frac b{d+1}}-2\,b\)\,,\quad b>2\,.
\]
All computations done, this gives
\begin{multline*}
b=b_\star(d)=\frac29\,\frac{2\,\sqrt{2\,(d+3)\,(2\,d+3)}+5\,d+9}{d+1}\\
\mbox{and} \quad \mathsf E\ge\mathsf e\(\frac{b_\star(d)}2-1\)=d+\frac{2\,(d+2)}{2\,(d+3) +\sqrt{2\,(d+3)\,(2\,d+3)}}\,.
\end{multline*}
Notice that $d\mapsto b_\star(d)$ is decreasing with $\lim_{d\to\infty}b_\star(d)=2$.

We have shown that
\[
\Lambda^\star\ge d+\frac{2\,(d+2)}{2\,(d+3)+\sqrt{2\,(d+3)\,(2\,d+3)}}\,.
\]
Conclusion holds for the same reasons as in Proposition~\ref{Prop:ImprovementSimplfied}.
\end{proof}

To conclude this section, let us state a last improvement that can be obtained under a stronger constraint. With the additional restriction of \emph{antipodal symmetry}, that is
\be{Eqn:antipodal}
u(-x)=u(x)\quad\forall\,x\in\S^d\,,
\ee
we can state an explicit result that goes as follows.
\begin{prop}\label{Prop:Antipodal} If $p\in(1,2)\cup(2,2^\#)$, we have
\[
\nrm{\nabla u}2^2\ge\frac{d^2+(d-1)^2\,(2^\#-p)}{d\,(p-2)}\(\nrm up^2-\nrm u2^2\)
\]
for any $u\in\mathrm H^1(\S^d,d\mu)$ such that~\eqref{Eqn:antipodal} holds. The limit case $p=2$ corresponds to the improved logarithmic Sobolev inequality
\[
\nrm{\nabla u}2^2\ge\frac{d^2+4\,d-1}{2\,d}\iS{|u|^2\,\log\(\frac{|u|^2}{\nrm u2^2}\)}
\]
for any $u\in\mathrm H^1(\S^d,d\mu)\setminus\{0\}$ such that~\eqref{Eqn:antipodal} holds. \end{prop}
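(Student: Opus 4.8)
The statement to be proved is Proposition~\ref{Prop:Antipodal}, the improved interpolation inequality under the antipodal symmetry constraint~\eqref{Eqn:antipodal}. The strategy is to reduce to the ultraspherical setting and then exploit the fact that antipodal symmetry forces the function, written in the variable $z$, to be \emph{even}: $f(-z)=f(z)$ a.e.\ on $(-1,1)$. First I would invoke the symmetrization argument already recalled in Section~\ref{Sec:ultraspherical} (see~\cite{MR3052352}) to pass from~\eqref{Ineq:GNS} on $\S^d$ to~\eqref{GNS-ultra} for functions of $z$ alone; one must check that this reduction is compatible with antipodal symmetry, i.e.\ that the symmetrized competitor still satisfies $u(-x)=u(x)$, which is immediate since the reduction replaces $u$ by its symmetric decreasing rearrangement in the $z$-variable and evenness in $z$ is preserved. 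Thus it suffices to prove
\[
-\scal f{\L f}=\ix{|f'|^2\;\nu}\ge\frac{d^2+(d-1)^2\,(2^\#-p)}{d\,(p-2)}\,\(\nrmx fp^2-\nrmx f2^2\)
\]
for every even $f\in\mathrm H^1((-1,1),d\nu_d)$, together with the $p=2$ limit.

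\textbf{Main step: the flow with the improved remainder.} I would run the heat flow~\eqref{heat} (equivalently~\eqref{Eqn:Linear} for $u$ with $\rho=|u|^p$) starting from the even datum $f$, and use the refined identity established in the proof of Proposition~\ref{Prop:ImprovementSimplfied}, namely
\begin{multline*}
-\,\frac12\,\frac d{dt}\ix{\(|u'|^2\,\nu+\frac d{p-2}\,|u|^2\)}\\
=\(1-(p-1)\,\frac{(d-1)^2}{d\,(d+2)}\)\ix{|u''|^2\,\nu^2}+\frac d{d+2}\,(p-1)\ix{\left|\frac{|u'|^2}u-\frac{d-1}d\,u''\right|^2\,\nu^2}\,.
\end{multline*}
The key point is that evenness of $f$ is preserved by the flow (the operator $\L$ maps even functions to even functions, and the nonlinearity $(p-1)|u'|^2/u$ does too), so at every time $t$ the function $u(t,\cdot)$ is even. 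For even functions one has the \emph{improved Poincar\'e-type inequality} controlling $\ix{|u''|^2\,\nu^2}$ from below: decomposing in ultraspherical polynomials, an even $H^1$ function is orthogonal to the first (odd) eigenfunction of $-\L$, whence $\ix{|u'|^2\,\nu}\ge 2(d+1)\ix{|u-\bar u|^2}$; more precisely I would use the second-order analogue obtained by applying~\eqref{Eqn:u''}, $\ix{|u''|^2\nu^2}=\ix{(\L u)^2}-d\ix{|u'|^2\nu}$, together with the spectral bound $\ix{(\L u)^2}\ge 2(d+1)\ix{|u'|^2\nu}$ valid because $u-\bar u$ lies in the span of ultraspherical polynomials of degree $\ge 2$. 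Plugging $\ix{|u''|^2\nu^2}\ge(2(d+1)-d)\ix{|u'|^2\nu}=(d+2)\ix{|u'|^2\nu}$ into the first term and discarding the manifestly nonnegative second term, one obtains
\[
-\,\frac12\,\frac d{dt}\ix{\(|u'|^2\,\nu+\frac\lambda{p-2}\,|u|^2\)}\ge0
\]
with $\lambda=d+\frac{(d-1)^2}{d}\,(2^\#-p)=\frac{d^2+(d-1)^2(2^\#-p)}{d}$, exactly the announced constant. Integrating in $t$ and using that $u(t,\cdot)\to\bar u$ as $t\to\infty$, so that the bracket tends to $\frac{\lambda}{p-2}(\bar u^2-\bar u^2)=0$ in the $p=2$-renormalized sense (and to the corresponding limit for $p\ne2$), gives the inequality at $t=0$, i.e.\ for $f$. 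The $p=2$ case follows either by the same flow argument with $\mathcal E_2$ in place of $\mathcal E_p$, or by passing to the limit $p\to2$ in the already proven family, noting $d^2+(d-1)^2(2^\#-2)=d^2+4d-1$.

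\textbf{Expected obstacle.} The routine part is the flow computation, which is a verbatim specialization of Proposition~\ref{Prop:ImprovementSimplfied} with $\lambda^\star$ replaced by the \emph{explicit} spectral constant $2(d+1)$ available under evenness. The one point needing care is the justification that $\lambda^\star_{\mathrm{even}}=2(d+1)$ for even functions \emph{without} the positivity hypothesis that was essential in Proposition~\ref{Prop:>d}: here the constraint is simply $f(-z)=f(z)$, which directly kills the degree-one component $z$ in the $\mathrm L^2(d\nu_d)$ expansion, so the infimum of $\ix{(\L u)^2}/\ix{|u'|^2\nu}$ over even $u$ is attained on the degree-two eigenspace and equals $2(d+1)$ with no need for a maximum-principle argument — this is precisely why the constant becomes explicit. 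A secondary technical issue is the approximation/regularity needed to run the flow from a general $H^1$ datum and to justify the boundary terms in the integrations by parts defining $\L$; this is standard parabolic theory on the compact interval with the weight $\nu_d$ and I would simply cite~\cite{DEKL2012,DEKL2014} for it, exactly as done for Propositions~\ref{propflowbeta1} and~\ref{propflowbetanot1}.
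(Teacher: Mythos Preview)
Your approach is precisely the one the paper indicates: specialize the flow identity from Proposition~\ref{Prop:ImprovementSimplfied} and replace $\lambda^\star$ by the explicit value $2(d+1)$, which is available because the heat flow preserves the symmetry and the symmetry kills the degree-one component in the spectral decomposition of~$-\L$. The core argument (steps 2--4 of your plan) is correct and matches the paper.

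The one place to revisit is your reduction step. The symmetric decreasing rearrangement on the sphere about a pole does \emph{not} preserve evenness in $z$: an antipodal function peaked near both poles rearranges to a function peaked near a single pole, which is no longer even. So the passage from antipodal $u\in\mathrm H^1(\S^d)$ to even $f\in\mathrm H^1((-1,1),d\nu_d)$ is not as immediate as you claim. You can sidestep this entirely by running the heat flow directly on $\S^d$: antipodal symmetry is preserved by~\eqref{Eqn:Heat}, antipodal functions are $\mathrm L^2$-orthogonal to the first eigenspace of $-\Lap$ (spanned by the odd coordinate functions $x_i$), hence $\iS{(\Lap u)^2}\ge 2(d+1)\iS{|\nabla u|^2}$, and the same computation goes through verbatim with the Bochner identity replacing~\eqref{Eqn:u''}. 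This is exactly how the paper treats the companion Theorem~\ref{Thm:Antipodal}; alternatively, as the paper does for Proposition~\ref{Prop:Antipodal} itself, you may simply cite~\cite[Sections~4.5--4.6]{DEKL2012} for the ultraspherical version.
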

See \cite[Section~4.5]{DEKL2012} for a proof based on the ultraspherical operator. It is easily recovered by taking the formula in Proposition~\ref{Prop:ImprovementSimplfied} and replacing $\lambda^\star$ by the second positive eigenvalue of the ultraspherical operator, namely $\lambda_2=2\,(d+1)$. As usual the case of the logarithmic Sobolev inequality is obtained by taking the limit as $p\to2$. This result is based on the heat flow~\eqref{heat} and one can get a \emph{better} result which also covers the range $p\in[2^\#,2^*]$ using a nonlinear diffusion.
\begin{thm}\label{Thm:Antipodal} If $p\in(1,2)\cup(2,2^*)$, we have
\[
\iS{|\nabla u|^2}\ge\frac d{p-2}\left[1+\frac{(d^2-4)\,(2^*-p)}{d\,(d+2)+p-1}\right]\(\nrm up^2-\nrm u2^2\)
\]
for any $u\in\mathrm H^1(\S^d,d\mu)$ such that~\eqref{Eqn:antipodal} holds. The limit case $p=2$ corresponds to the improved logarithmic Sobolev inequality
\[
\iS{|\nabla u|^2}\ge\frac d2\frac{(d+3)^2}{(d+1)^2}\iS{|u|^2\,\log\(\frac{|u|^2}{\nrm u2^2}\)}
\]
for any $u\in\mathrm H^1(\S^d,d\mu)\setminus\{0\}$ such that~\eqref{Eqn:antipodal} holds. \end{thm}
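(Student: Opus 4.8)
The plan is to prove Theorem~\ref{Thm:Antipodal} by the nonlinear \emph{carr� du champ} method, following the ultraspherical reduction of Section~\ref{Sec:ultraspherical} together with the refinement used in Proposition~\ref{Prop:Antipodal}: the point is that, under antipodal symmetry, the analogue of $\lambda^\star$ in Proposition~\ref{propflowbetanot1} can be replaced by a \emph{known} eigenvalue, so the improved constant becomes fully explicit. First I would reduce to the ultraspherical setting: by the symmetrization argument of \cite{MR3052352} it suffices to prove the one-dimensional inequality for $f\in\mathrm H^1((-1,1),d\nu_d)$, and the antipodal symmetry $u(-x)=u(x)$ becomes $f(-z)=f(z)$, i.e.\ $f$ is even in $z$. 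For such $f$, write $\rho=|f|^p=w^{\beta p}$ and run the flow~\eqref{FDE}, equivalently~\eqref{Eqn:Nonlinear} for $w$, with the choice $\beta=\beta_-(p,d)$ (or $m=2/3$ when $d=3$, $p=6$) as in Proposition~\ref{prop:SufficientConsidtion}.

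The key step is to sharpen the differential inequality of Proposition~\ref{propflowbetanot1}. Recall the identity established there,
\[
-\,\frac1{2\,\beta^2}\,\frac d{dt}\ix{\(\big|(w^\beta)'\big|^2\,\nu+\tfrac d{p-2}\,\big(w^{2\beta}-\overline w^{2\beta}\big)\)}=\ix{\left|\,w''-\,\tfrac{d-1}{d+2}\,(\kappa+\beta-1)\,\tfrac{|w'|^2}w\,\right|^2\nu^2}+\gamma(\beta)\ix{\tfrac{|w'|^4}{w^2}\,\nu^2}\,,
\]
and use~\eqref{Eqn:u''} to write $\ix{|w''|^2\nu^2}=\ix{(\L w)^2}-d\ix{|w'|^2\nu}$. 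Since antipodal symmetry is preserved by~\eqref{Eqn:Nonlinear} (the nonlinearity commutes with $z\mapsto-z$, and $\L$ maps even functions to even functions), the even function $w(t,\cdot)-\overline w$ is orthogonal to $z$ for all $t$; hence the analogue of the spectral-gap quantity $\lambda^\star$ along this flow is bounded below by the \emph{second} positive eigenvalue $\lambda_2=2(d+1)$ of $-\L$, rather than merely by $\lambda_1=d$. Substituting this lower bound into the square term $\ix{|w''|^2\nu^2}$ produces, after collecting the coefficients of $\ix{|w'|^2\nu}$ and $\ix{|w'|^4/w^2\,\nu^2}$, a strengthened inequality $\tfrac d{dt}\big(\mathcal I_p[\rho]-\Lambda_{\mathrm{ap}}\,\mathcal E_p[\rho]\big)\le 0$ for an explicit $\Lambda_{\mathrm{ap}}>d$; the algebra should collapse to $\Lambda_{\mathrm{ap}}=\tfrac d{p-2}\big[1+\tfrac{(d^2-4)(2^*-p)}{d(d+2)+p-1}\big]$, which one checks reduces to $2(d+1)$ at $p=1$ and to the stated constant $\tfrac d2\tfrac{(d+3)^2}{(d+1)^2}$ in the limit $p\to2$.

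Having the monotonicity, I would conclude exactly as in Section~\ref{Sec:Flows}: $\rho(t,\cdot)\to\ix\rho$ as $t\to\infty$ (now within the antipodally symmetric class), so $\mathcal I_p[\rho]-\Lambda_{\mathrm{ap}}\,\mathcal E_p[\rho]\to 0$, whence it is nonnegative at $t=0$, which is the asserted inequality for $f$; unwinding $\rho=|f|^p$ and lifting back to $\S^d$ via the symmetrization gives the statement for $u$. The limit case $p=2$ follows by letting $p\to2$ in the subcritical inequality, using that $\mathcal E_p[\rho]\to\mathcal E_2[\rho]$ and that $\Lambda_{\mathrm{ap}}/(p-2)$ has the stated finite limit, as is standard for these families.

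The main obstacle I anticipate is two-fold. First, one must verify that the optimal admissible $\beta$ in the antipodal class is still $\beta_-(p,d)$ all the way up to $p=2^*$ — in particular handling the boundary cases $d=3$, $p=6$ ($\beta=+\infty$, treated via $m=2/3$ directly in~\eqref{FDE}) and $p=2^*$, $d\ge4$ ($\beta_+=\beta_-$) — and that the resulting coefficient of $\ix{|w'|^4/w^2\,\nu^2}$ stays nonnegative after the spectral substitution, so that term can be discarded; this is the computational heart and is where the precise value of the constant is pinned down. Second, one must justify that the flow~\eqref{FDE} is well-posed and smoothing on the antipodally symmetric cone with the stated conservation and convergence properties, and that $t\mapsto\mathcal I_p[\rho]-\Lambda_{\mathrm{ap}}\,\mathcal E_p[\rho]$ is absolutely continuous with the computed derivative; these regularity matters are routine here and are handled as in~\cite{DEKL2012,DEKL2014,1302}, to which the details can be deferred.
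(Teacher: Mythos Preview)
Your strategic idea is exactly the paper's: run the nonlinear flow, observe that it preserves antipodal symmetry, and replace the first eigenvalue $d$ by the second eigenvalue $2(d+1)$ in the spectral step. Two aspects of your execution diverge from the paper, and one of them is a genuine gap.

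\medskip
\textbf{The parameter choice.} You propose $\beta=\beta_-(p,d)$, i.e.\ $\gamma(\beta)=0$, and hope that ``the algebra should collapse'' to the stated constant. It does not with that choice. The paper instead takes
\[
\beta=\frac{d+2}{d+3-p}\,,\qquad \theta=\frac{(d-1)^2\,(p-1)}{d\,(d+2)+p-1}\,,
\]
and, crucially, uses a \emph{different} completion of the square than the one in Proposition~\ref{propflowbetanot1}: the right-hand side is rewritten (on $\S^d$, via~\cite{1302}) so as to isolate
\[
(1-\theta)\iS{(\Lap w)^2}+\frac{\theta\,d}{d-1}\iS{{\rm Ric}(\nabla w,\nabla w)}
\]
plus a nonnegative remainder. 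On antipodal functions $\iS{(\Lap w)^2}\ge 2(d+1)\iS{|\nabla w|^2}$, while the Ricci term contributes $\theta\,d\iS{|\nabla w|^2}$, yielding
\[
\lambda_\star=(1-\theta)\,2(d+1)+\theta\,d\,,
\]
which one checks equals $d\big[1+\frac{(d^2-4)(2^*-p)}{d(d+2)+p-1}\big]$. With your $\beta=\beta_-$ and the square as written in Proposition~\ref{propflowbetanot1}, there is no free coefficient in front of $\ix{|w''|^2\nu^2}$ to exploit the improved gap; the extraction of a $(1-\theta)$-share of $|w''|^2$ (or $(\Lap w)^2$) while keeping the rest nonnegative is precisely the step you are missing, and it forces the specific $(\beta,\theta)$ above, not $\beta_-$.

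\medskip
\textbf{Setting and style.} The paper works directly on $\S^d$ and argues by rigidity (uniqueness of positive solutions of~\eqref{Eqn}) rather than by flow monotonicity on the ultraspherical line. This sidesteps your symmetrization step, which is not innocuous here: standard decreasing rearrangement on the sphere does \emph{not} preserve antipodal symmetry, so your reduction ``antipodal on $\S^d$ $\Rightarrow$ even $f$ on $(-1,1)$'' is not justified by the reference you cite. Working on $\S^d$ and noting that the flow commutes with $x\mapsto -x$ avoids this issue entirely.
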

The above constants are probably not optimal as we get no improvement for $p=2^*$ while one is expected because of the result in~\cite{MR1124290}. We may also observe that when $d=3$, the quotient of the sphere~$\S^3$ by the antipodal symmetry is homeomorphic to the group of rotations $SO(3)$ on~$\R^3$. The range in $p$ covered in Theorem~\ref{Thm:Antipodal}, that is $p\in[1,2^*]$, is larger than the range covered in Proposition~\ref{Prop:Antipodal}, namely $p\in[1,2^\#]$. Moreover, a tedious but elementary computation shows that
\[
d\left[1+\frac{(d^2-4)\,(2^*-p)}{d\,(d+2)+p-1}\right]-\frac{d^2+(d-1)^2\,(2^\#-p)}d\ge\frac{(d-1)^2\,(p-1)^2}{d\,\big(d\,(d+2)+p-1\big)}
\]
is nonnegative for any $p\in[1,2^\#]$, then showing that the constant in Theorem~\ref{Thm:Antipodal} is larger than the constant in Proposition~\ref{Prop:Antipodal}.

\begin{proof}[Proof of Theorem~\ref{Thm:Antipodal}] The proof is implicitly contained in \cite {1302}. Using the flow
defined by
\[
\frac{\partial w}{\partial t}=w^{2-2\beta}\(\Delta w+\big(1+\beta\,(p-2)\big)\,\frac{|\nabla w|^2}w\)
\]
it was shown that for all $0<\lambda<\lambda_\star$, where
\[
\lambda_\star:=\inf_{\begin{array}{c}w\in\mathrm H^2(\S^{d})\\\nabla w\not\equiv0\end{array}}\frac{\displaystyle\iS{\Big[(1-\theta)\,(\Lap w)^2+\frac{\theta\,d}{d-1}\,{\rm Ric}(\nabla w,\nabla w)\Big]}}{\iS{|\nabla w|^2}}\;,
\]
the equation
\be{Eqn}
-\,\Lap u+\frac\lambda{p-2}\,\(u-u^{p-1}\)=0
\ee
has a unique positive solution $u\in C^2(\S^d)$, which is constant and equal to $1$ for all $p\in(2,2^*)$. Here, ${\rm Ric}(\nabla w,\nabla w)$ denotes the Ricci curvature, which on $\S^d$ is given by $(d-1)\,|\nabla w|^2$. The constants $\theta$ and $\beta$ are chosen to be
\[
\theta=\frac{(d-1)^2\,(p-1)}{d\,(d+2)+p-1}\quad{\rm and}\quad\beta=\frac{d+2}{d+3-p}\,.
\]
Now one observes that the flow preserves functions that have antipodal symmetry and hence these considerations apply in this case as well. On the space of functions with antipodal symmetry one finds that the operator $(\Lap)^2-2\,(d+1)\,\Lap$ is nonnegative which implies the inequality
\[
\iS{(\Lap u)^2}\ge2\,(d+1)\iS{|\nabla u|^2}\,.
\]
In particular this yields that
\[
\lambda_\star=(1-\theta)\,2\,(d+1)+\theta\,d\,,
\]
which proves the theorem. The improved logarithmic Sobolev inequality follows by taking the limit $p\to2$ and is standard. For more details the reader may consult \cite{1302}.\end{proof}

\section*{Concluding remarks and open problems}\label{Sec:Conclusion}

The limiting exponent $2^\#=\frac{2\,d^2+1}{(d-1)^2}$ for the proofs based on the heat flow is not a technical limitation, since monotonicity cannot be ensured for $p\in(2^\#,2^*]$. On the other hand, when $p<2^\#$, it is possible to prove explicit improvements of the inequalities under an additional integral constraint, in the spirit of the Bianchi-Egnell estimate for the critical Sobolev inequality. Explicit estimates of the optimal constants for constrained problems (with integral constraints) when $p>2^\#$ are so far open questions.

All computations have been done for integer values of $d$ only, because of the $d$-dimensional interpretation of the computations in Section~\ref{Sec:Intro}. However, computations of Sections~\ref{Sec:ultraspherical}--\ref{Sec:Improvements} can also be done for non-integer values of $d$. In this paper, computations have been limited to the $d$-dimensional sphere and even to the case of the ultraspherical operator, but the exponent $2^\#$ also appears on general manifolds with positive curvature: see~\cite{1302} for a discussion and some extensions. The discussion of the general case is however less interesting because the equation which generalizes~\eqref{Eqn:w} has, in general, no explicit solution, and also because the constant obtained by the flow method is only a lower bound for the optimal constant in the interpolation inequality. By Obata's theorem, this bound is actually not optimal except when the manifold is a sphere.

It is an open question to understand whether the improved rates that can be obtained in the asymptotic regimes also determine optimal constants in the global interpolation inequalities. The improvements of Section~\ref{Sec:Improvements} show that there are still lots of issues to understand in the case of constrained problems for subcritical and critical interpolation inequalities. It also emphasizes the role of the exponent $p=2^\#$ and its connection with the heat flow.

\bigskip\noindent{\bf Acknowledgements.} The authors thank Dominique Bakry for raising the key question studied in this paper and for the friendly interactions that took place on this occasion. They also thank a referee for a careful reading which was helpful for improving the manuscript.
\par\medskip\noindent{\small\copyright\,2015 by the authors. This paper may be reproduced, in its entirety, for non-commercial purposes.}


\newpage
\end{document}